\newcommand{\be}{\begin{eqnarray}}
\newcommand{\ee}{\end{eqnarray}}
\newcommand{\beq}{\begin{equation}}
\newcommand{\eeq}{\end{equation}}
\newcommand{\beqn}{\begin{equation*}}
\newcommand{\eeqn}{\end{equation*}}
\newcommand{\imag}{\mathrm{i}}
\newcommand{\round}[1]{\lfloor#1\rfloor}
\newtheorem{thm}{Theorem}[section]
\newtheorem{lem}[thm]{Lemma}
\newtheorem{defn}[thm]{Definition}
\newtheorem{example}[thm]{Example}
\newcommand\cH{{\mathcal H}}
\newcommand\cN{{\mathcal N}}
\newcommand\cQ{{\mathcal Q}}
\newcommand\bN{{\mathbb N}}
\newcommand\bR{{\mathbb R}}
\newcommand\bT{{\mathbb T}}
\newcommand\bZ{{\mathbb Z}}
\newcommand\rd{{\mathrm d}}
\newcommand{\ve}{\varepsilon}
\def\bfP{\mathbf{P}}
\def\bfx{\mathbf{x}}
\def\bfy{\mathbf{y}}
\begin{document}

\title[Finite-dimensional distributions of dispersing billiard processes]{A note on the finite-dimensional distributions of dispersing billiard processes}

\author[Juho Lepp\"anen]{Juho Lepp\"anen}
\address[Juho Lepp\"anen]{
Department of Mathematics and Statistics, P.O.\ Box 68, Fin-00014 University of Helsinki, Finland.}
\email{juho.leppanen@helsinki.fi}

\author[Mikko Stenlund]{Mikko Stenlund}
\address[Mikko Stenlund]{
Department of Mathematics and Statistics, P.O.\ Box 35, Fin-40014 University of Jyv\"askyl\"a, Finland.}
\email{mikko.s.stenlund@jyu.fi}

\keywords{Dispersing billiards, finite-dimensional distributions, weak convergence, decorrelation}

 


\begin{abstract}
In this short note we consider the finite-dimensional distributions of sets of states generated by dispersing billiards with a random initial condition. We establish a functional correlation bound on the distance between the finite-dimensional distributions and corresponding product distributions. We demonstrate the usefulness of the bound by showing that it implies several limit theorems. The purpose of this note is to provide a tool facilitating the study of more general functionals of the billiard process.
\end{abstract}

\maketitle


\subsection*{Acknowledgements}
This work was supported by the Jane and Aatos Erkko Foundation, and by Emil Aaltosen S\"a\"ati\"o. 


\section{Introduction}

%


In this note we revisit the two-dimensional dispersing Sinai billiard with finite horizon. To specify the model, we consider the torus $\bT^2$ with a finite collection of scatterers, i.e., closed convex sets $S_1,\dots,S_M$, having $C^3$ boundaries $\partial S_m$ with strictly positive curvatures. A particle moves linearly in the domain $\bT^2\setminus\cup_{m=1}^M S_m$, with unit speed, up to elastic collisions with the boundaries $\partial S_m$ of the scatterers. The scatterers are disjoint and positioned so that the free path length of the particle is bounded. Note that the precise dynamics of the system is fully determined by the geometry of the domain $\bT^2\setminus\cup_{m=1}^M S_m$. 

A standard discrete-time representation of the dynamics is obtained by keeping track of the collisions only, which leads to the so-called collision map $T:X\to X$ as follows: Topologically, $X$ is the disjoint union of $m$ cylinders $X_m$, homeomorphic to $\partial S_m\times[-\frac\pi2,\frac\pi2]$. A general point $x\in X$ consists of a pair $x = (r,\varphi)$, where $r$ represents the position of the particle on the boundary $\cup_{m=1}^M \partial S_m$ during a collision, and $\varphi$ represents its direction immediately after the collision relative to the normal of the boundary. Then $Tx = (r',\varphi')$ is defined as the corresponding pair after the next collision. Since the continuous-time system is Hamiltonian, preserving phase-space volume, the collision map preserves a corresponding Borel probability measure, namely $\rd\mu(r,\varphi) = \text{const}\cdot\cos\varphi\,\rd r\,\rd\varphi$, on $X$. Reversing the velocity of the particle, one moreover verifies that $T$ is invertible.

Given an initial state $x\in X$, the billiard dynamics generates the sequence of states $(T^ix)_{i\in\bN}\in X^\bN$. If $x$ is chosen at random, according to the invariant measure~$\mu$, the sequence is a stationary random process, which we call the billiard process. We can equally well define the two-sided billiard process $(T^ix)_{i\in\bZ}$, and everything below extends readily to that setup, but let us proceed with the one-sided case. Of course, knowledge of the value of $T^ix$ for some $i$ fully determines the value of $T^jx$ for all $j$. Yet, the same is in general not true of, say, $f(T^ix)$ and $g(T^jx)$ where $f,g:X\to\bR$ are ``observables''. For instance, if~$f$ and $g$ are H\"older continuous, then an exponential covariance bound
\beq\label{eq:pair}
\left|\int f(T^ix)g(T^jx)\,\rd\mu(x) - \int f(T^ix)\,\rd\mu(x)\int g(T^j x)\,\rd\mu(x)\right| \le C\theta^{j-i}
\eeq
holds. Above, $C>0$ and $\theta\in(0,1)$ depend on the H\"older classes of $f,g$ and on system constants\footnote{In this paper system constants are quantities which only depend on the geometry of the domain $\bT^2\setminus\cup_{m=1}^M S_m$.}. Obviously, more general bounds exist, but~\eqref{eq:pair} is sufficient for the ongoing illustrative discussion.
The covariance bound~\eqref{eq:pair}, and other similar results, are consequences of the chaotic nature of the billiard dynamics. Colloquially, we may regard the observations $f(T^ix)$ as weakly dependent random variables, which in one form or another is at the heart of proving probabilistic limit results for functionals $F\circ (T^ix)_{i\in\bN}$ of the billiard process, say concerning the asymptotic behavior of the Birkhoff sums
\beqn
S_N(x) = \sum_{i=0}^{N-1} f(T^i x)
\eeqn
in the limit $N\to\infty$.

To proceed, we recall that a finite-dimensional distribution of the billiard process is the joint distribution~$\bfP_I$ of a subsequence~$(T^i(x))_{i\in I}$ corresponding to a finite index set $I = (i_1,i_2\dots,i_p)\subset\bN$. From here on, we will without loss of generality always assume that the indices in such an index set are in increasing order, $i_1<\dots<i_p$. The probability measure~$\bfP_I$ on~$X^p$ is characterized by the identity
\beqn
\int h \, \rd\bfP_I = \int h(T^{i_1}x,T^{i_2}x,\dots,T^{i_p}x)\,\rd\mu(x)
\eeqn
for bounded measurable functions $h:X^p\to\bR$. Of course, stationarity of the billiard process means that $\bfP_I = \bfP_{I'}$ for all translates $I' = (i_1+k,\dots,i_p+k)$ of $I$, which is clearly true by the invariance of~$\mu$.

For example, in terms of finite-dimensional distributions,~\eqref{eq:pair} reads
\beqn
\left|\int f\otimes g\,\rd\bfP_{(i,j)} - \int f\otimes g\,\rd(\bfP_i\otimes\bfP_j)\right| \le C\theta^{j-i}
\eeqn
where $(f\otimes g)(x,y) = f(x)g(y)$. In this weak sense, we may informally write
\beqn
\text{``\ $\bfP_{(i,j)}\approx\bfP_i\otimes\bfP_j$\ ''}
\eeqn
when $j-i$ is large.

\medskip
\noindent{\bf Convention.} In the rest of the note we will consider unions
\beqn
I = \bigcup_{1\le k\le K} I_k
\eeqn
of increasing nonempty index sets $I_k = (i_{p_{k-1}+1},\dots,i_{p_k})\subset\bN$, where $p_0=0$. We will always assume they are disjoint and ordered, in the sense that the gap between~$I_k$ and~$I_{k+1}$ satisfies
\beqn
\ell_k = i_{p_k+1} - i_{p_k} > 0
\eeqn 
for all $k=1,\dots,K-1$. We shall henceforth write
\beqn
I_1\le \dots \le I_K
\eeqn
to express these conventions succinctly.

\medskip

Being still informal, higher order correlation bounds indicate that, when each $\ell_k$ is large,
\beq\label{eq:approx}
\text{``\ $\bfP_I \approx \bfP_{I_1}\otimes\dots\otimes\bfP_{I_K}$\ ''}
\eeq
in some weak sense. 
The purpose of this brief note is to make this interpretation precise. As an aside, it provides a unified perspective on several limit theorems that we treat as examples: We will obtain an estimate on the difference between $\bfP_I$ and $\bfP_{I_1}\otimes\dots\otimes\bfP_{I_K}$ in an appropriately general sense of practical use, which is then shown to imply all the limit theorems. Let us immediately be clear that the latter limit theorems, per se, have been proved elsewhere, in the references cited (although we do obtain some minor improvements). Thus a side goal here is to shed additional light on why those theorems are true, in a mathematically rigorous way. The main result is the aforementioned estimate, which we call the ``functional correlation bound''. We expect it to be of much broader use, as it is directly applicable to studying other kinds of functionals of the billiard process than the examples included here. In short, we view the functional correlation bound as a tool which helps put the vague statement in~\eqref{eq:approx} onto a solid footing, in reasonable generality, so that it can be used effectively in technical proofs in the theory of dispersing billiards.

\medskip
\noindent{\bf Structure of this note.} In Section~\ref{sec:results} we state two theorems on functional correlation decay. In Section~\ref{sec:examples} we give examples of using them for deducing limit theorems, and in Section~\ref{sec:proofs} we prove them.

\section{Results}\label{sec:results}
Before stating the results, we recall a few standard facts from the theory of dispersing billiards. The reader is referred to the book~\cite{ChernovMarkarian_2006} for more details.

In the disjoint union~$X=\cup_{m=1}^M X_m$, the cylinders $X_m = \partial S_m\times[-\frac\pi2,\frac\pi2]$ are further divided into horizontal strips $X_{m,k} = \partial S_m\times\{b_k<\varphi<b_{k+1}\}$, $k\in\bZ$, called homogeneity strips. (Here the numbers $b_k$ are such that $b_k-b_{k+1} = O(k^{-3})$, which facilitates controlling distortions of the map $T$ within each strip.) We consider the totality of the homogeneity strips the connected components of $X$. For a pair of points~$x,y\in X$, we say that their trajectories separate when $T^n x$ and $T^n y$ are in different components for the first time $n\ge 0$; this $n$ is called the future separation time, which we denote by~$s_+(x,y)$. We define $s_+(x,y) = \infty$ if the trajectories never separate. The past separation time~$s_-(x,y)$ is the analogous notion for the inverse map~$T^{-1}$.

A local stable manifold $W^s(x)$ of a point $x\in X$ is a maximal~$C^2$ curve such that~$T^n W^s(x)$ is completely contained in a component of~$X$, for all~$n\ge 0$. That is, given $n\ge 0$, there exist $m,k$ such that $T^nW^s(x)\subset X_{m,k}$. It can be shown that almost every point has a nontrivial local stable manifold, and that the length of~$T^n W^s(x)$ decreases exponentially as~$n\to\infty$. Given two points~$x,y\in X$, we either have~$W^s(x)=W^s(y)$ (meaning $y\in W^s(x)$) or~$W^s(x)\cap W^s(y) = \emptyset$. Note that, in the first case, $s_-(T^n x,T^n y) = n + s_-(x,y)$ for all $n\ge 0$. The family of all local stable manifolds is uncountable, and forms a measurable partition of~$X$. Local unstable manifolds $W^u(x)$ have identical properties in terms of the inverse map~$T^{-1}$. In particular, they form a measurable partition of~$X$. Moreover, if $y\in W^u(x)$, then $s_+(T^{-n}x,T^{-n}y) = n + s_+(x,y)$ for all $n\ge 0$.

We also recall the notion of dynamical H\"older continuity. The following definition is from \cite{Stenlund_2010}. It is a small variation of the one in \cite{Chernov_2006}, but in the current form it enjoys the property of being dynamically closed, which is used in the proofs; see Lemma~\ref{lem:dynHolder}.
\begin{defn}
A function~$g:X\to\bR$ is dynamically H\"older continuous on local unstable manifolds with rate $\vartheta\in(0,1)$ and constant $c\ge 0$ if
\beqn
|g(x) - g(y)| \le c\,\vartheta^{s_+(x,y)}
\eeqn
holds whenever $x$ and $y$ belong to the same local unstable manifold. In this case we write $g\in\cH_+(c,\vartheta)$. Likewise, $g$ is dynamically H\"older continuous on local stable manifolds if
\beqn
|g(x) - g(y)| \le c\,\vartheta^{s_-(x,y)}
\eeqn
holds whenever $x$ and $y$ belong to the same local stable manifold. In this case we write $g\in\cH_-(c,\vartheta)$.
\end{defn}
For instance, if $g:X\to\bR$ is H\"older continuous with exponent $\alpha\in(0,1)$ and constant $|g|_\alpha$, then $g\in\cH_-(c,\vartheta)\cap \cH_+(c,\vartheta)$, where $c = \text{const}\cdot |g|_\alpha$, and $\vartheta = \vartheta(\alpha)$ is determined by~$\alpha$ and system constants.

Finally, we introduce the class of admissible test functions $F$:
\begin{defn}\label{defn:admissible}
Given increasingly ordered index sets $I_1\le \dots\le I_K$, $K\ge 2$, we say that a bounded function $F:X^{p_K}\to\bR$ is $(I_1,\dots,I_K)$-admissible, if it is separately dynamically H\"older continuous in the sense that 
\beqn
x_r\mapsto F(x_1,\dots,x_{p_K})
\in
\begin{cases}
\cH_+(c_r,\vartheta_+), & 1 \le r \le p_1,
\\
\cH_+(c_{r+},\vartheta_+) \cap \cH_-(c_{r-},\vartheta_-), & p_1+1 \le r\le p_{K-1},
\\
\cH_-(c_r,\vartheta_-), & p_{K-1}+1 \le r\le p_K.
\end{cases}
\eeqn
\end{defn}

Here is our first functional correlation bound, concerning the case $K=2$:
\begin{thm}\label{thm:pair}
There exist system constants $M_0,M_1>0$ and $\theta_0,\theta_1\in(0,1)$ such that the following holds. Let $I_1\le I_2$, and let $F$ be $(I_1,I_2)$-admissible:
\beqn
x_r\mapsto F(x_1,\dots,x_{p_2})
\in
\begin{cases}
\cH_+(c_r,\vartheta_+), & 1\le r \le p_1,
\\
\cH_-(c_r,\vartheta_-), & p_{1}+1 \le r\le p_2.
\end{cases}
\eeqn
Then
\beqn
\begin{split}
& \left| \int F\,\rd\bfP_{I_1\cup I_2} -  \int F\,\rd(\bfP_{I_1}\otimes\bfP_{I_2}) \right|
\\
\le\ & \left(\sum_{r = 1}^{p_1} c_r \vartheta_+^{i_{p_1} - i_r}\right)\!\vartheta_+^{\frac14 \ell_1} 
+ M_0\!\left(\,\sum_{r=p_1+1}^{p_2} c_r\vartheta_-^{i_r - i_{p_1+1}} + \|F\|_\infty\right)\! \max(\theta_0,\vartheta_-)^{\frac14 \ell_1-\frac13} 
\\
&\qquad + 2M_1\|F\|_\infty  \theta_1^{\frac14 \ell_1-\frac13}.
\end{split}
\eeqn
Here $\ell_1 = i_{p_1+1}-i_{p_1}$ is the gap between $I_1$ and $I_2$. 
\end{thm}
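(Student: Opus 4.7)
\emph{Reduction to a two-variable problem.} My plan is to compress $F$ into a bivariate function on $X\times X$ and reduce the claim to a two-variable decorrelation estimate for dynamically H\"older observables. Set $m = i_{p_1}$ and $n = \ell_1 = i_{p_1+1}-m$; put $a_r = i_r-m\le 0$ for $r\le p_1$ and $b_r = i_r-i_{p_1+1}\ge 0$ for $r>p_1$, and define
\[
\tilde F(x,y) = F\bigl(T^{a_1}x,\dots,T^{a_{p_1-1}}x,\,x,\,y,\,T^{b_{p_1+2}}y,\dots,T^{b_{p_2}}y\bigr).
\]
The $T$-invariance of $\mu$ (a shift of time stamps) then yields
\[
\int F\,\rd\bfP_{I_1\cup I_2} = \int \tilde F(x,T^n x)\,\rd\mu(x), \qquad \int F\,\rd(\bfP_{I_1}\otimes\bfP_{I_2}) = \iint \tilde F(x,y)\,\rd\mu(x)\,\rd\mu(y),
\]
so the theorem reduces to bounding the difference of these two integrals.

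\emph{Regularity transfer.} Using the identity $s_+(T^{-k}u,T^{-k}v) = s_+(u,v)+k$ for $u,v$ on the same local unstable manifold---the dynamical-closure property underlying Lemma~\ref{lem:dynHolder}---substituting $T^{a_r}x$ in the $r$-th slot of $F$ with $r\le p_1$ multiplies the $\cH_+$-constant of that slot by $\vartheta_+^{|a_r|} = \vartheta_+^{i_{p_1}-i_r}$. A telescoping sum over $r$ yields, uniformly in $y$, $\tilde F(\cdot,y)\in\cH_+(C_1,\vartheta_+)$ with $C_1 = \sum_{r=1}^{p_1}c_r\vartheta_+^{i_{p_1}-i_r}$. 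Symmetrically, uniformly in $x$, $\tilde F(x,\cdot)\in\cH_-(C_2,\vartheta_-)$ with $C_2 = \sum_{r=p_1+1}^{p_2}c_r\vartheta_-^{i_r-i_{p_1+1}}$, and trivially $\|\tilde F\|_\infty\le\|F\|_\infty$. These coefficients are precisely the prefactors multiplying the three exponential factors in the claimed bound.

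\emph{Bivariate decorrelation.} What remains is to estimate $\bigl|\int \tilde F(x,T^n x)\,\rd\mu-\iint \tilde F\,\rd\mu\otimes\rd\mu\bigr|$ by a sum of three terms. The plan is to adapt the standard pair-correlation proof for dispersing billiards (\cite{ChernovMarkarian_2006,Stenlund_2010}) to the bivariate setting, splitting the gap so that each of three mechanisms consumes roughly $\ell_1/4$ iterations: (i) disintegrate $\mu$ along the measurable partition into local unstable manifolds and exploit the $\cH_+$-regularity of $\tilde F(\cdot,y)$ to contract the $x$-dependence, producing the clean first term $C_1\vartheta_+^{\ell_1/4}$; (ii) handle the $y$-dependence analogously via local stable manifolds and $\cH_-$-regularity, combined with the growth lemma to discard short stable manifolds, whose exceptional mass contributes the $\|F\|_\infty$ correction in the middle term and whose decay rate is combined with $\vartheta_-$ in the maximum; (iii) apply the scalar exponential decay of correlations to the residual nearly-product integrand to produce the $\theta_1^{\ell_1/4-\tfrac13}$ factor. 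The $-\tfrac13$ shift in the exponents reflects the polynomial counting involved in the growth-lemma bookkeeping over the countably many homogeneity strips.

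\emph{Main obstacle.} The reduction and regularity transfer are essentially routine; the core technical work is the bivariate decorrelation step. Unlike the product case $\tilde F(x,y)=\phi(x)\psi(y)$, where one reduces to $\int\phi\cdot\psi\circ T^n\,\rd\mu$ and applies the scalar correlation bound directly, the joint $(x,y)$-dependence forces a parametric implementation of the growth/coupling arguments. The main difficulty is ensuring that the system constants $M_0,M_1,\theta_0,\theta_1$ and the exponential rates can be taken uniformly in the "frozen" variable, and that the three error contributions decouple into an additive rather than multiplicative form; this is ultimately what produces the asymmetric three-term bound, with the $\cH_+$-side emerging cleanly while the $\cH_-$-side picks up both a growth-lemma correction and the distinct rate $\theta_0$.
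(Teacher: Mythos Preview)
Your reduction to the bivariate function $\tilde F$ and the regularity transfer via Lemma~\ref{lem:dynHolder} are correct and coincide with the paper's setup. The plan for the decorrelation step, however, misidentifies the mechanisms behind two of the three error terms and, as stated, would not produce the claimed bound.

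First, the gap-splitting cannot be done with the fixed shift $m=i_{p_1}$ you wrote down. With that choice, $\tilde F(\cdot,y)\in\cH_+(C_1,\vartheta_+)$, but freezing $x$ on an unstable leaf $\xi_q$ only gives the error $C_1\vartheta_+^{s_+(x,y)}$ with $s_+(x,y)\ge 0$---no factor $\vartheta_+^{\ell_1/4}$. The paper instead shifts by $m\approx i_{p_1}+\tfrac14\ell_1$ (the smallest integer $\ge\tfrac14(3i_{p_1}+i_{p_1+1})$), so that the $\cH_+$-constant of $x\mapsto G(x,\cdot)$ already carries the extra $\vartheta_+^{m-i_{p_1}}\approx\vartheta_+^{\ell_1/4}$. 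The $-\tfrac13$ in the exponents is then purely an integer-rounding artifact of this choice of $m$, not a homogeneity-strip count.

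Second, your steps (ii) and (iii) are swapped relative to the paper and invoke the wrong tools. There is \emph{no} disintegration along local stable manifolds and \emph{no} invocation of scalar pair-correlation decay on a ``residual nearly-product integrand''. After the freezing step the paper has $\int_\cQ\int_{\xi_q}\int_{\xi_q} G(x,T^{i_{p_1+1}'}y)\,\rd\nu_q(y)\,\rd\nu_q(x)\,\rd\lambda(q)$, and the inner $\rd\nu_q(y)$ integral is handled by the equidistribution lemma (Lemma~\ref{lem:billiards_pushforward}): for $G(x,\cdot)\in\cH_-$ one has $\bigl|\int_{\xi_q} G(x,T^n\cdot)\,\rd\nu_q-\int G(x,\cdot)\,\rd\mu\bigr|\le M_0(C_2+\|F\|_\infty)\max(\theta_0,\vartheta_-)^{\frac12(n-a_0|\log|\xi_q||)}$. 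This single lemma, applied on the good set $\{|\xi_q|>e^{-i_{p_1+1}'/(3a_0)}\}$, is the entire source of the middle term; both $\theta_0$ and the $\|F\|_\infty$ correction are already built into it. The third term is then simply the trivial bound $2\|F\|_\infty$ on the bad set of short \emph{unstable} leaves, weighted by the tail estimate $\lambda(\{|\xi_q|\le\varepsilon\})\le M_1\varepsilon$ of Lemma~\ref{lem:billiards_tail}; this is where $\theta_1=e^{-1/a_0}$ comes from. After these two steps one lands exactly on $\iint G\,\rd\mu\otimes\rd\mu=\int F\,\rd(\bfP_{I_1}\otimes\bfP_{I_2})$---there is no residual to decorrelate.
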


The second functional correlation bound extends the first one to $K\ge 2$. While it is entirely possible to formulate the result for general admissible test functions~$F$, the resulting bound has a cumbersome expression. For aesthetic reasons alone, we restrict to functions admissible with the same parameters, leaving generalizations to the reader. By ``the same parameters'' we mean that $c_r\equiv c_{r\pm}\equiv c$ and $\vartheta_-=\vartheta_+$ in Definition~\ref{defn:admissible}.

\begin{thm}\label{thm:multiple}
Let $I_1\le\dots\le I_K$, $K\ge 2$, and let $F$ be $(I_1,\dots,I_K)$-admissible, with the same parameters $c\ge 0$ and $\vartheta\in(0,1)$. Then 
\beqn
\left|\int F\,\rd\bfP_{I} - \int F\,\rd(\bfP_{I_1}\otimes\dots\otimes\bfP_{I_K})\right| \le M\!\left(\frac{c}{1-\vartheta} + \|F\|_\infty \right)\!\sum_{k=1}^{K-1}\theta^{\ell_k}. 
\eeqn
Here $\ell_k = i_{p_k+1}-i_{p_k}$ is the gap between $I_k$ and $I_{k+1}$,
\beqn
\theta = \max(\vartheta,\theta_0,\theta_1)^{\frac14}
\eeqn
and
\beqn
M = \max\Bigl(1+M_0\theta_0^{-\frac13}\, ,\,M_0\theta_0^{-\frac13}+2M_1\theta_1^{-\frac13}\Bigr).
\eeqn
The system constants $M_0,M_1$ and $\theta_0,\theta_1$ are the same as in Theorem~\ref{thm:pair}.
\end{thm}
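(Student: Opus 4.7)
The strategy is a telescoping argument that reduces the $K$-factor bound to $K-1$ applications of the bipartite bound in Theorem~\ref{thm:pair}. I introduce the interpolating product measures
\[
Q_j = \bfP_{I_1}\otimes\dots\otimes\bfP_{I_j}\otimes\bfP_{I_{j+1}\cup\dots\cup I_K}, \qquad 0\le j\le K,
\]
with the convention that the trailing factor disappears when $j=K$. Then $Q_0=\bfP_I$ while $Q_{K-1}=Q_K=\bfP_{I_1}\otimes\dots\otimes\bfP_{I_K}$, so the identity
\[
\int F\,\rd\bfP_I - \int F\,\rd(\bfP_{I_1}\otimes\dots\otimes\bfP_{I_K}) = \sum_{j=1}^{K-1}\!\left(\int F\,\rd Q_{j-1} - \int F\,\rd Q_j\right)
\]
produces exactly $K-1$ error terms, one per gap $\ell_j$.

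For each fixed $j\in\{1,\dots,K-1\}$ I integrate out the first $p_{j-1}$ variables and set
\[
\tilde F_j(x_{p_{j-1}+1},\dots,x_{p_K}) = \int F(x_1,\dots,x_{p_K})\,\rd(\bfP_{I_1}\otimes\dots\otimes\bfP_{I_{j-1}})(x_1,\dots,x_{p_{j-1}}).
\]
The $j$-th telescoping term then becomes
\[
\int\tilde F_j\,\rd\bfP_{I_j\cup\dots\cup I_K} - \int\tilde F_j\,\rd(\bfP_{I_j}\otimes\bfP_{I_{j+1}\cup\dots\cup I_K}),
\]
which fits exactly the bipartite setup of Theorem~\ref{thm:pair} with first group $I_j$ and second group $I_{j+1}\cup\dots\cup I_K$. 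To invoke that theorem I must verify admissibility: since $j\le K-1$ forces $p_j\le p_{K-1}$, indices $r\in\{p_{j-1}+1,\dots,p_j\}$ lie in the $\cH_+(c,\vartheta)$ regime of $F$ (the edge $\{1,\dots,p_1\}$ when $j=1$, the middle range otherwise), while indices $r\in\{p_j+1,\dots,p_K\}$ lie in the $\cH_-(c,\vartheta)$ regime. Pulling the absolute value inside the defining integral shows that $\tilde F_j$ inherits these one-sided dynamical H\"older moduli with the same constants $c$ and $\vartheta$, and trivially $\|\tilde F_j\|_\infty\le\|F\|_\infty$.

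Applying Theorem~\ref{thm:pair} to $\tilde F_j$ then yields a bound involving the two H\"older sums $\sum_{r=p_{j-1}+1}^{p_j}c\vartheta^{i_{p_j}-i_r}$ and $\sum_{r=p_j+1}^{p_K}c\vartheta^{i_r-i_{p_j+1}}$; since the $i_r$ are strictly increasing integers, each is at most $c/(1-\vartheta)$. The three resulting exponential factors $\vartheta^{\ell_j/4}$, $\max(\theta_0,\vartheta)^{\ell_j/4-1/3}$ and $\theta_1^{\ell_j/4-1/3}$ collapse to a common $\theta^{\ell_j}$ via $\vartheta^{1/4},\theta_0^{1/4},\theta_1^{1/4}\le\theta$, at the cost of the constants $\theta_0^{-1/3}$ and $\theta_1^{-1/3}$; grouping the coefficients of $c/(1-\vartheta)$ and of $\|F\|_\infty$ and taking the larger of the two yields precisely the constant $M$ in the statement. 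Summing over $j=1,\dots,K-1$ closes the argument. Given Theorem~\ref{thm:pair}, there is no substantive obstacle; the only steps that deserve care are the admissibility inheritance under partial integration (for which exchanging the absolute value with the integral suffices) and the correct alignment of intermediate index ranges with the one-sided H\"older regimes of $F$, both of which amount to pure bookkeeping.
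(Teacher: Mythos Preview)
Your proof is correct and follows essentially the same route as the paper: both arguments reduce the $K$-block estimate to $K-1$ applications of the bipartite bound of Theorem~\ref{thm:pair}, with the simplification of constants to $M$ and $\theta$ carried out exactly as you describe. The only cosmetic difference is that the paper phrases this as an induction peeling off the \emph{last} block $I_K$ (applying the $K=2$ case to $(I_1\cup\dots\cup I_{K-1},I_K)$ and then recursing on the frozen function $G_{\bfy}(\bfx)=F(\bfx,\bfy)$), whereas you telescope from the front, splitting $(I_j,\,I_{j+1}\cup\dots\cup I_K)$ after integrating out $I_1,\dots,I_{j-1}$; the two decompositions are mirror images of one another and lead to the same bound.
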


A result in the spirit of Theorem~\ref{thm:multiple} was recently proved by Lepp\"anen~\cite{Leppanen_2017}, for a class of one-dimensional, non-uniformly expanding dynamical systems.

In fact, the inductive proof of Theorem~\ref{thm:multiple} shows that the special case $K = 2$ and the general case $K\ge 2$ are equivalent. This hinges on the dynamical closedness of the function classes $\cH_-$ and $\cH_+$; see Lemma~\ref{lem:dynHolder}. 

At first, the theorems may seem like inconsequential extensions of correlation bounds such as the one displayed in~\eqref{eq:pair}. But they do allow for estimating integrals of functionals of the billiards process, $\int F\circ (T^ix)_{i\in\bN}\,\rd\mu$, beyond the scope of simple correlation bounds. Just to give a simple example, consider a situation of the following kind:

\begin{example}\label{exa:sums}
Let $A:\bR^K\to\bR$ be Lipschitz continuous with constant $L$ in each variable, and let the index sets $I_1\le \dots\le I_K$ be as above. Define the sums
\beqn
S^{(k)} = \sum_{r=p_{k-1}+1}^{p_k} f_r\circ T^{i_r}, \qquad 1\le k\le K,
\eeqn
where the functions $f_r:X\to\bR$ are bounded, and dynamically H\"older continuous with the same parameters~$c\ge 0$ and $\vartheta\in(0,1)$.
Let us consider the intergral
\beqn
\mathscr{I} = \int A\bigl(S^{(1)}(x) \, ,\dots,\, S^{(K)}(x)\bigr)\,\rd\mu(x). 
\eeqn
We would like to argue that, when each $\ell_k = i_{p_k+1} - i_{p_k}$ is large, the sums in the argument of $A$ are weakly dependent, so $\mathscr{I}$ must be close to
\beqn
\mathscr{I}^\otimes =  \int A\!\left(S^{(1)}(x_1),\dots,S^{(K)}(x_K)\right) \rd\mu^{\otimes K}(x_1,\dots,x_K),
\eeqn
where the sums are literally independent due to the product measure. 
Theorem~\ref{thm:multiple} helps make such an argument rigorous: Let $F:X^{p_K}\to\bR$ be the function
\beqn
F(x_1,\dots,x_{p_K}) = A\!\left(\sum_{r=1}^{p_1} f_r(x_r)\, ,\dots, \, \sum_{r={p_{K-1}+1}}^{p_K} f_r(x_r)\right).
\eeqn
Then $F$ is $(I_1,\dots,I_K)$-admissible:
$
x_r \mapsto F(x_1,\dots,x_{p_K}) \in \cH_+(Lc ,\vartheta)\cap\cH_-(Lc,\vartheta)
$
for all $r$, and 
\beqn
\|F\|_\infty \le C = 
\begin{cases}
\|A\|_\infty & \text{if $A$ is bounded},
\\
L\sum_{r=1}^{p_K}\|f_r\|_\infty + A(0,\dots,0) & \text{if $A$ is unbounded}.
\end{cases}
\eeqn
In both cases, we immediately arrive at the quantitative estimate
\beqn
|\mathscr{I} - \mathscr{I}^\otimes| \le M\!\left(\frac{Lc}{1-\vartheta} + C \right)\!\sum_{k=1}^{K-1}\theta^{\ell_k}.
\eeqn
\end{example}

For instance, Example~\ref{exa:sums} applies to ``interlaced'' covariances of the form
\beqn
\begin{split}
& \int A_1(S^{(1)} + S^{(3)} + \dots + S^{(K-1)})\,A_2(S^{(2)} + S^{(4)} + \dots + S^{(K)})\,\rd\mu
\\
-\ & \int A_1(S^{(1)} + S^{(3)} + \dots + S^{(K-1)})\,\rd \mu \int A_2(S^{(2)} + S^{(4)} + \dots + S^{(K)})\,\rd\mu,
\end{split}
\eeqn
where the argument of the Lipschitz function $A_1$ (respectively, $A_2$) involves $f_r\circ T^{i_r}$ with $i_r\in I_k$ and $k$ odd (respectively, $k$ even). Here $K$ is even for convenience. This is so, because both terms in the difference can be compared with the integral with respect to the product measure~$\rd\mu^{\otimes K}$.

In the special case of singleton index sets, $I_k = \{i_k\}$, Example~\ref{exa:sums} yields a bound on
\beqn
\begin{split}
&\int A(f_1(T^{i_1}x),\dots, f_K(T^{i_{K}}x)) \,\rd\mu(x) 
- \int A(f_1(T^{i_1}x_1),\dots, f_K(T^{i_{K}}x_K))\,\rd\mu^{\otimes K}(x_1,\dots,x_K).
\end{split}
\eeqn
Such bounds are relevant, e.g., for multiple recurrence problems.


\section{More examples}\label{sec:examples}
In this section we give applications of Theorem~\ref{thm:multiple} to limit results. We reiterate that the sole purpose of this section is to illustrate the usefulness of the theorem: it allows to check the conditions of various limit theorems with great ease. The verified conditions actually amount to \emph{very} simple special cases of Theorem~\ref{thm:multiple}. We thus believe the result to be a tool of much broader use in analyzing dispersing billiard dynamics.

Below, the various constants in the results concerning billiards will be the same as in Theorem~\ref{thm:multiple}.

\subsection{Multiple correlation bounds}

\begin{thm}\label{thm:multicorr_bound}
Let $f_0,\dots, f_r\in \cH_+(c,\vartheta)$ and $g_0,\dots,g_k\in \cH_-(c,\vartheta)$, and define
\beqn
\tilde f = f_0 \cdot f_1\circ T^{-1} \cdots f_r\circ T^{-r}
\quad\text{and}\quad
\tilde g = g_0\cdot g_1\circ T^{1} \cdots g_k\circ T^{k}.
\eeqn
Suppose \(\| f_u \|_{\infty} = \max_{0 \le i \le r} \| f_i \|_{\infty} \) and \(\| g_v \|_{\infty} = \max_{0 \le i \le k} \| g_i \|_{\infty} \). Then
\beqn
\left | \int \tilde f \cdot  \tilde g\circ T^{n}\,\rd\mu - \int \tilde f\,\rd\mu \int\tilde g\,\rd\mu \right | \leq C\theta^n
\eeqn
for all $n\geq 0$, where
\begin{align*}
C = M\Vert f_u \Vert_{\infty}^r\Vert g_v \Vert_{\infty}^{k}\max\{\Vert f_u \Vert_{\infty}, \Vert g_v \Vert_{\infty} \}\left(\frac{c}{1-\vartheta} +  \min\{\Vert f_u \Vert_{\infty}, \Vert g_v \Vert_{\infty} \}\right).
\end{align*}
\end{thm}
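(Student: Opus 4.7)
The strategy is to deduce Theorem~\ref{thm:multicorr_bound} as a direct application of Theorem~\ref{thm:multiple} with $K=2$, applied to a suitable product test function. The first move is to invoke $T$-invariance of $\mu$ to eliminate the negative iterates appearing in $\tilde f$, writing
\beqn
\int \tilde f \cdot \tilde g \circ T^n\,\rd\mu = \int (\tilde f\circ T^r)\cdot(\tilde g\circ T^{n+r})\,\rd\mu,
\eeqn
so that only the iterates $T^0 x,\dots,T^r x$ and $T^{n+r}x,\dots,T^{n+r+k}x$ occur. This immediately suggests the index sets $I_1 = (0,1,\dots,r)$ and $I_2 = (n+r,n+r+1,\dots,n+r+k)$, with gap $\ell_1 = n$.

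Next I would introduce the product function
\beqn
F(x_1,\dots,x_{r+1},y_1,\dots,y_{k+1}) = \prod_{j=0}^{r} f_j(x_{r+1-j})\cdot \prod_{j=0}^{k} g_j(y_{j+1}),
\eeqn
whose evaluation at the iterates in $I_1\cup I_2$ reproduces $\tilde f\circ T^r\cdot \tilde g\circ T^{n+r}$. Because $F$ factors across the two blocks, a second application of $T$-invariance gives $\int F\,\rd(\bfP_{I_1}\otimes\bfP_{I_2}) = \int \tilde f\,\rd\mu\,\int \tilde g\,\rd\mu$, so the quantity to be bounded is exactly the error controlled by Theorem~\ref{thm:multiple}.

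The remaining substantive step is the verification of $(I_1,I_2)$-admissibility with a common pair of parameters. Fixing any coordinate in the first block, $F$ reduces to a single $f_j$ multiplied by a bounded scalar of modulus at most $\|f_u\|_\infty^r\|g_v\|_\infty^{k+1}$; since $\cH_+(\cdot,\vartheta)$ is closed under multiplication by bounded scalars, $F$ lies in $\cH_+(c\|f_u\|_\infty^r\|g_v\|_\infty^{k+1},\vartheta)$ in each such variable. Symmetrically, for each second-block coordinate $F$ lies in $\cH_-(c\|f_u\|_\infty^{r+1}\|g_v\|_\infty^k,\vartheta)$. Both constants are dominated by
\beqn
c' := c\,\|f_u\|_\infty^r\|g_v\|_\infty^k\max(\|f_u\|_\infty,\|g_v\|_\infty),
\eeqn
while clearly
\beqn
\|F\|_\infty\le \|f_u\|_\infty^{r+1}\|g_v\|_\infty^{k+1} = \|f_u\|_\infty^r\|g_v\|_\infty^k\max(\|f_u\|_\infty,\|g_v\|_\infty)\min(\|f_u\|_\infty,\|g_v\|_\infty).
\eeqn
Plugging these estimates into Theorem~\ref{thm:multiple} and factoring $\|f_u\|_\infty^r\|g_v\|_\infty^k\max(\|f_u\|_\infty,\|g_v\|_\infty)$ out of the parenthesis $\tfrac{c'}{1-\vartheta}+\|F\|_\infty$ yields exactly $C\theta^n$. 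The only real difficulty is this algebraic book-keeping of H\"older constants; the hypothesis that $\|f_u\|_\infty$ and $\|g_v\|_\infty$ are the respective maxima is introduced precisely so that the identity $\|f_u\|_\infty\|g_v\|_\infty = \max\cdot\min$ can be exploited at this last step. No billiard-specific input beyond Theorem~\ref{thm:multiple} is required.
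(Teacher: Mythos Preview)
Your proposal is correct and follows essentially the same approach as the paper's proof: the same index sets $I_1=(0,\dots,r)$ and $I_2=(n+r,\dots,n+r+k)$, the same product function $F$ (up to renaming of variables), the same admissibility constant $c_F=c\|f_u\|_\infty^r\|g_v\|_\infty^k\max(\|f_u\|_\infty,\|g_v\|_\infty)$, and the same appeal to Theorem~\ref{thm:multiple} with $K=2$. The only cosmetic difference is that you spell out the use of $T$-invariance and the $\max\cdot\min$ algebra explicitly, whereas the paper leaves these implicit in ``which implies the desired bound.''
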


It was shown in \cite{Chernov_2006} that such a multiple correlation bound suffices for the central limit theorem to hold: If $f\in\cH_-(c,\vartheta)\cap\cH_+(c,\vartheta)$ is bounded and $\int f\,\rd\mu = 0$, then
\beqn
\frac{1}{\sqrt N} \sum_{i = 0}^{N-1} f\circ T^i
\eeqn
converges weakly, as $N\to\infty$, to the normal distribution $\cN(0,\sigma^2)$ with zero mean and variance
\beqn
\sigma^2 = \int f^2\,\rd\mu + 2\sum_{i = 1}^\infty \int f\,f\circ T^i\,\rd\mu.
\eeqn
See also~\cite{Pene_2004} for a closely related result.
To be technically accurate, \cite{Chernov_2006} dealt with a smaller class of observables, as did~\cite{Pene_2004}. In \cite{Stenlund_2010} it was shown that, for the present classes~$\cH_\pm$, the multiple correlation bound is equivalent to the pair correlation bound corresponding to the special case $r=k=0$; and consequently that the pair correlation bound alone is enough for the central limit theorem.

\begin{proof}[Proof of Theorem~\ref{thm:multicorr_bound}] Define \(I_1 = (0,\ldots, r)\) and \(I_2 = (n+r,\ldots, n + r + k) \). Then
\begin{align*}
\int \tilde f \cdot  \tilde g\circ T^{n}\,\rd\mu = \int F \, \rd\bfP_{I_1 \cup I_2}
\quad\text{and}\quad
 \int \tilde f\,\rd\mu \int\tilde g\,\rd\mu = \int F \, \rd(\bfP_{I_1}\otimes\bfP_{I_2})
\end{align*}
where \(F : \, X^{r+k+2} \to \bR\) is the function
\begin{align*}
F(x_1,\ldots, x_{r+1},x_{r+2},\ldots, x_{r+k+2}) = f_r(x_1)\cdots f_0(x_{r+1})g_0(x_{r+2})\cdots g_k(x_{r+k+2}).
\end{align*}
Set
$
c_F = c\Vert f_u \Vert_{\infty}^r\Vert g_v \Vert_{\infty}^{k}\max\{\Vert f_u \Vert_{\infty}, \Vert g_v \Vert_{\infty} \}.
$
Since $f_i\in \cH_+(c,\vartheta)$ and $g_i\in \cH_-(c,\vartheta)$, we have
\beqn
x_j\mapsto F(x_1,\dots,x_{r+k+2})
\in
\begin{cases}
\cH_+(c_F,\vartheta), & 1\le j \le r+1,
\\
\cH_-(c_F,\vartheta), & r+2 \le j\le r+k+2.
\end{cases}
\eeqn
Hence, \(F\) is \((I_1,I_2)\)-admissible with the same parameters $c_F$ and \(\vartheta\). By Theorem~\ref{thm:multiple},
\begin{align*}
&\left | \int \tilde f \cdot  \tilde g\circ T^{n}\,\rd\mu - \int \tilde f\,\rd\mu \int\tilde g\,\rd\mu \right | 
\le M\!\left(\frac{c_F}{1-\vartheta} + \Vert F \Vert_{\infty} \right)\!\theta^n ,
\end{align*}
which implies the desired bound.
\end{proof}

\subsection{Multivariate normal approximation by Stein's method}
In this section and the next, we show that Theorem~\ref{thm:multiple} implies not only normal convergence but also estimates on the speed of convergence. In particular, we treat the case of multivariate normal distributions arising from vector-valued observables.

Let $T:X\to X$ be a general transformation preserving a probability measure~$\mu$. We introduce the following notations: Given an observable $f:X\to\bR^d$, we write
\beqn
f^k = f\circ T^k
\eeqn
for all $k\ge 0$, denoting the coordinate functions of $f^k$ by $f_\alpha^k$, $\alpha\in\{1,\dots,d\}$. We set
\beqn
W_N = \frac{1}{\sqrt{N}}\sum_{k=0}^{N-1} f^k
\eeqn
for all $N\ge 0$. For $0\le K < N$, we introduce the time window
\beqn
[n]_{N,K} = \{ k \in \bN_0 \cap [0,N-1] \, : \, |k-n| \le K \}
\eeqn
around $n\ge 0$, and define 
\beqn
W^n_{N,K} = W_N - \frac{1}{\sqrt{N}} \sum_{k\in [n]_{N,K}} f^k
\eeqn
for $0 \le n< N-1$. Thus, $W^n_{N,K}$ is a modification of $W_N$ where the times $k\in [n]_{N,K}$ are omitted in the sum. Finally, $\Phi_\Sigma(h)$ stands for the expectation of a function~$h:\bR^d\to\bR$ with respect to the centered multivariate normal distribution with covariance matrix~$\Sigma$. We write $\|f\|_\infty = \max_{1\le\alpha\le d}\|f_\alpha\|_d$, $\|D^2 h\|_\infty = \max_{1\le\alpha,\beta\le d}\|\partial_\alpha\partial_\beta h\|_\infty$, etc., for the norms of tensor fields.

The following theorem was proved in~\cite{HellaLeppanenStenlund_2016}, where an adaptation of Stein's method~\cite{Stein_1972} to the study of dynamical systems was developed:

\begin{thm}\label{thm:Stein} Let \(f: \, X \to \bR^d\) be a bounded measurable function with \(\mu(f) = 0\). Suppose \(h: \, \bR^d \to \bR\) is three times differentiable with \(\Vert D^k h \Vert_{\infty} < \infty\) for \(1 \le k \le 3\). Fix integer $N>2$. Suppose there exists $\theta\in(0,1)$ such that the following conditions are satisfied:

\begin{itemize}
\item[\bf (A1)] There exist constants $C_2 > 0$ and $C_4 > 0$ such that
\begin{align*}
|\mu(f_{\alpha} f^k_{\beta})| & \le C_2\,\theta^k
\\
|\mu(f_{\alpha} f_{\beta}^l f_{\gamma}^m f_{\delta}^n)| & \le C_4 \min\{\theta^l,\theta^{n-m}\}
\\
|\mu(f_{\alpha} f_{\beta}^l f_{\gamma}^m f_{\delta}^n) - \mu(f_{\alpha} f_{\beta}^l)\mu(f_{\gamma}^m f_{\delta}^n)| & \le C_4\,\theta^{m-l}
\end{align*}
hold whenever $k\ge 0$; $0\le l\le m \le n < N$; and $\alpha,\beta,\gamma,\delta\in\{1,\dots,d\}$.

\smallskip
\item[\bf (A2)] There exists a constant $C_0$ such that
\begin{align*}
|\mu( f^n \cdot  \nabla h(v + W^n_{N,K} t) ) | \le C_0\,\theta^K
\end{align*}
holds for all $0\le n < N$, $0\le t\le 1$, $v\in\bR^d$, and $0\le K<N$.

\smallskip
\item[\bf (A3)] $f$ is not a coboundary in any direction.\footnote{This is a standard condition, requiring that, given $u\in\bR^d\setminus\{0\}$, the scalar function $u\cdot f$ cannot be written in the form~$g-g\circ T$ for any $L^2(\mu)$ function $g:X\to\bR$.}

\end{itemize}
Then
\beq\label{eq:Sigma}
\Sigma = \mu(f \otimes f) + \sum_{n=1}^{\infty} (\mu(f^n \otimes f) + \mu(f \otimes f^n))
\eeq
is a well-defined, symmetric, positive-definite, $d\times d$ matrix; and
\beq\label{eq:Stein_billiars}
|\mu(h(W_N)) - \Phi_{\Sigma}(h)| 
\le \left(C_* \! \left(\tfrac{2}{|{\log\theta}|} + \tfrac{\theta}{\sqrt 3(1-\theta)}\right) + C_0\right)\cdot \frac{\log N}{\sqrt N}
\eeq 
where
\beqn
C_* = 12d^3\max\{C_2,\sqrt{C_4}\}\left(\Vert D^2 h\Vert_{\infty} + \Vert f \Vert_{\infty} \Vert D^3 h \Vert_{\infty} \right)\sum_{i = 0}^\infty (i+1)\theta^i .
\eeqn
\end{thm}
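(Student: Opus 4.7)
The plan is to follow the multivariate Stein's method framework of \cite{HellaLeppanenStenlund_2016}, in which the decay assumptions~(A1)--(A2) take over the role played by independence or mixing in the classical setting. First, I would verify that $\Sigma$ in~\eqref{eq:Sigma} is well defined and positive definite: absolute convergence of the series follows from the pair correlation bound in~(A1), symmetry is immediate, and positive-definiteness follows from~(A3) by the standard coboundary argument (if $u\cdot\Sigma u=0$ for some $u\neq 0$, then $u\cdot f$ would be an $L^2(\mu)$-coboundary). Next, I would invoke Barbour's generator approach: for smooth $h:\bR^d\to\bR$ there exists a smooth solution $\phi_h$ of
\beqn
\operatorname{tr}\bigl(\Sigma\, D^2 \phi_h(x)\bigr) - x\cdot\nabla\phi_h(x) = h(x) - \Phi_\Sigma(h),
\eeqn
with $\|D^k\phi_h\|_\infty \le c_k\|D^k h\|_\infty$ for $k=2,3$. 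Evaluating at $x=W_N$ and integrating against $\mu$ reduces the problem to bounding $\mu\bigl(\operatorname{tr}(\Sigma\, D^2\phi_h(W_N)) - W_N\cdot\nabla\phi_h(W_N)\bigr)$.

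The core step is to write $W_N\cdot\nabla\phi_h(W_N) = N^{-1/2}\sum_{n=0}^{N-1} f^n\cdot\nabla\phi_h(W_N)$ and Taylor-expand $\nabla\phi_h(W_N)$ around $W^n_{N,K}$ up to second order. The zeroth-order term $\mu(f^n\cdot\nabla\phi_h(W^n_{N,K}))$ is bounded directly by~(A2). The first-order term produces sums of $\mu(f^n_\alpha f^k_\beta\, \partial_\alpha\partial_\beta\phi_h(W^n_{N,K}))$ for $k\in [n]_{N,K}$ which, using stationarity together with the pair and triple correlation bounds in~(A1), reassemble, up to an error controlled by another application of~(A2), into $\mu(\operatorname{tr}(\Sigma\, D^2\phi_h(W_N)))$; the tails of the series for $\Sigma$ truncated outside the window $[n]_{N,K}$ contribute an exponentially small error. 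The second-order Taylor remainder produces four-point correlation expressions $\mu(f_\alpha^n f_\beta^k f_\gamma^l f_\delta^m)$ with indices in $[n]_{N,K}$, and is precisely what the two four-point bounds of~(A1) are designed to control, at a cost proportional to $\|f\|_\infty\|D^3 h\|_\infty$.

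Collecting all contributions produces a bound of the form $A\,K/\sqrt{N} + B\,\theta^K + C_0\,\theta^K$, with $A,B$ explicit in $C_2,C_4,d,\|f\|_\infty$ and $\|D^k h\|_\infty$. Choosing $K \sim |\log\theta|^{-1}\log N$ balances these terms and yields the announced $\log N/\sqrt N$ rate with the displayed constant $C_*$. The main obstacle I expect is the combinatorial bookkeeping in the Taylor expansion: one must organize the triple correlation terms so that, after using stationarity to shift indices, they match the ordered form $0\le l\le m\le n<N$ required by~(A1), and verify that the factor $\sum_{i\ge 0}(i+1)\theta^i$ appearing in $C_*$ arises precisely from summing pair separations within the window $[n]_{N,K}$ after such shifts.
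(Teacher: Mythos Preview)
The paper does not give its own proof of this theorem. It is stated there as a quotation of a result proved in~\cite{HellaLeppanenStenlund_2016}; the sentence immediately preceding the statement reads ``The following theorem was proved in~\cite{HellaLeppanenStenlund_2016}, where an adaptation of Stein's method~\cite{Stein_1972} to the study of dynamical systems was developed.'' There is therefore nothing in the present paper to compare your proposal against.

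That said, your sketch is a faithful outline of the argument one expects in the cited source: Barbour's generator form of Stein's equation, a Taylor expansion of $\nabla\phi_h(W_N)$ around $W^n_{N,K}$, control of the zeroth-order term via~(A2), matching of the first-order term with $\operatorname{tr}(\Sigma\,D^2\phi_h)$ via the pair bound in~(A1), control of the Taylor remainder via the four-point bounds, and the final choice $K\sim |\log\theta|^{-1}\log N$. One small inaccuracy: you refer to ``triple correlation bounds in~(A1)'', but~(A1) contains only pair and four-point estimates. In the actual argument the first-order term is handled by combining the pair bound with a second use of the mechanism behind~(A2) (applied now to $\partial_\alpha\partial_\beta\phi_h$ rather than $\nabla\phi_h$), not by any triple bound; you allude to this yourself when you write ``up to an error controlled by another application of~(A2)'', so the slip is more in wording than in substance.
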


Returning to billiards, we now prove the following:
\begin{thm}\label{thm:Stein_billiards}
Assume $f:X\to\bR^d$ is bounded, $\int f\,\rd\mu = 0$, and there exist constants~$c\ge 0$ and~$\vartheta\in(0,1)$ such that 
$
f_\alpha \in\cH_-(c,\vartheta)\cap\cH_+(c,\vartheta)
$
for all $\alpha\in\{1,\dots,d\}$.
Then, for all $N$, condition (A1) is satisfied with
\begin{align*}
C_2 = M\Vert f \Vert_{\infty}\!\left(\frac{c}{1-\vartheta} +  \|f\|_{\infty}\right)
\quad\text{and}\quad
C_4 = M\Vert f \Vert_{\infty}^3\!\left(\frac{c}{1-\vartheta} +  \|f\|_{\infty}\right),
\end{align*}
and condition (A2) is satisfied with
\begin{align*}
C_0 = M\!\left(d^2c\frac{ \Vert \nabla h \Vert_{\infty}  + \Vert f \Vert_{\infty} \Vert D^2 h \Vert_{\infty} }{1-\vartheta} + d\Vert f \Vert_{\infty} \Vert \nabla h \Vert_{\infty}\right).
\end{align*}
\end{thm}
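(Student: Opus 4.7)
The plan is to verify conditions (A1) and (A2) by expressing each quantity as the difference between an integral against $\bfP_I$ and an integral against a product measure $\bfP_{I_1}\otimes\cdots\otimes\bfP_{I_K}$ for a suitable partition of the relevant index set into consecutive blocks, and then invoking Theorem~\ref{thm:multiple}. In every case the key observation is that the hypothesis $\mu(f) = 0$ causes the product-measure integral to vanish whenever one block is a singleton $\{i_r\}$ on which the integrand depends linearly through a coordinate of $f(x_r)$.

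For (A1), the three bounds each correspond to a specific index-set split, with the test function obtained as the literal product of the integrand's factors. For the pair bound I would take $I_1 = \{0\}$, $I_2 = \{k\}$ with $F(x,y) = f_\alpha(x) f_\beta(y)$; admissibility holds with parameters $c\|f\|_\infty$ and $\vartheta$, the product integral vanishes since $\mu(f) = 0$, and Theorem~\ref{thm:multiple} yields the claimed $C_2\theta^k$. For the third bound take $I_1 = \{0,l\}$, $I_2 = \{m,n\}$ with the analogous four-factor $F$ and admissibility parameter $c\|f\|_\infty^3$; Theorem~\ref{thm:multiple} directly gives $\theta^{m-l}$. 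For the middle bound I would establish $\theta^l$ and $\theta^{n-m}$ separately via the singleton splits $I_1=\{0\},\,I_2=\{l,m,n\}$ and $I_1=\{0,l,m\},\,I_2=\{n\}$: in each split the singleton block annihilates the product integral via $\mu(f)=0$, and keeping the better of the two bounds produces $\min\{\theta^l,\theta^{n-m}\}$.

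For (A2) I would partition the time indices underlying $f^n \cdot \nabla h(v + tW^n_{N,K})$ into three consecutive blocks $I_1 = \{0,\dots,n-K-1\}$, $I_2 = \{n\}$, $I_3 = \{n+K+1,\dots,N-1\}$, both gaps being at least $K+1$, and set
\beqn
F(x_1,\dots,x_{p_3}) = f(x_n) \cdot \nabla h\!\left(v + \frac{t}{\sqrt N}\sum_{r\neq n} f(x_r)\right),
\eeqn
so that $\int F\,\rd\bfP_I$ reproduces the quantity bounded in (A2). Under the product measure $\bfP_{I_1}\otimes \bfP_{I_2}\otimes \bfP_{I_3}$ the factor $f(x_n)$ is independent of every other $x_r$, hence Fubini combined with $\mu(f) = 0$ annihilates the product integral. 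The substantive computation is then verifying admissibility and extracting Hölder parameters via the chain rule: holding all other variables fixed, the dependence of $F$ on $x_n$ has dynamical Hölder constant at most $dc\|\nabla h\|_\infty$, while for $r\neq n$ the dependence on $x_r$ has dynamical Hölder constant at most $d^2 c \|f\|_\infty \|D^2 h\|_\infty (t/\sqrt N)$. Using the common parameter $\tilde c \le d^2 c(\|\nabla h\|_\infty + \|f\|_\infty \|D^2 h\|_\infty)$ and noting $\|F\|_\infty \le d\|f\|_\infty \|\nabla h\|_\infty$, Theorem~\ref{thm:multiple} delivers a bound of order $M(\tilde c/(1-\vartheta) + \|F\|_\infty)(\theta^{\ell_1} + \theta^{\ell_2})$, which rearranges to the stated $C_0\theta^K$ after absorbing the numerical factor $2\theta$ coming from the two gaps into the generic constant. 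Boundary cases where $I_1$ or $I_3$ is empty reduce to a two-block application of Theorem~\ref{thm:pair}; when both are empty one has $W^n_{N,K} \equiv 0$ and the integrand becomes $f^n \cdot \nabla h(v)$, which integrates to $\mu(f) \cdot \nabla h(v) = 0$ trivially.

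The main obstacle is the chain-rule bookkeeping in (A2): one must confirm that the nonlinearity of $\nabla h$ preserves separate dynamical Hölder continuity in each variable on both stable and unstable manifolds, so that $F$ satisfies the split conditions of Definition~\ref{defn:admissible}. This follows cleanly from $f_\alpha \in \cH_+(c,\vartheta) \cap \cH_-(c,\vartheta)$, which in particular supplies the two-sided dynamical continuity required for the middle block $I_2 = \{n\}$.
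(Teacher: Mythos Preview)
Your treatment of (A1) coincides with the paper's: both reduce immediately to the product case handled in Theorem~\ref{thm:multicorr_bound}, itself a direct instance of Theorem~\ref{thm:multiple}.

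For (A2) you take a genuinely different decomposition. The paper uses only \emph{two} blocks, placing the index $n$ inside one of the flanking blocks (for instance $I_1=(0,\dots,n-K,n)$ and $I_2=(n+K,\dots,N-1)$ when $n\ge K$, and the mirror choice otherwise), and invokes Theorem~\ref{thm:multiple} once with a single gap of size $K$. You instead isolate $n$ in its own block and use \emph{three} blocks $I_1\le\{n\}\le I_3$, so that the product integral $\int F\,\rd(\bfP_{I_1}\otimes\bfP_{\{n\}}\otimes\bfP_{I_3})$ factors through $\int f\,\rd\mu=0$ and vanishes identically. This is more transparent: in the paper's two-block split the product integral does \emph{not} vanish on its face (under $\bfP_{I_1}$ the factor $f(T^n x)$ is still correlated with $\sum_{k\le n-K}f(T^k x)$ appearing inside $\nabla h$), so a further decorrelation step of size $\theta^K$ is needed, which the paper leaves implicit. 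Your route makes that step unnecessary.

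The only blemish is the bookkeeping at the end: the three-block bound produces $\theta^{\ell_1}+\theta^{\ell_2}=2\theta^{K+1}$ rather than a single $\theta^K$, and this factor cannot be ``absorbed into the generic constant'' since $M$ in Theorem~\ref{thm:multiple} is a fixed system constant. What you actually prove is (A2) with $2\theta\,C_0$ (hence $2C_0$) in place of $C_0$. That is harmless for the downstream application in Theorem~\ref{thm:Stein}, but it is a discrepancy from the exact constant asserted in the statement.
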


A similar result (with different constants) was recently proved in~\cite{HellaLeppanenStenlund_2016}, but there a direct scheme for checking~(A2) was implemented. Here we illustrate that~(A1) and~(A2) --- as well as the bound in \eqref{eq:Stein_billiars} --- are immediate consequences of Theorem~\ref{thm:multiple}.

\begin{proof}[Proof of Theorem~\ref{thm:Stein_billiards}]
That (A1) holds with the given expressions of $C_2$ and $C_4$ is immediate; see Theorem \ref{thm:multicorr_bound}. Condition (A2) follows by applying Theorem \ref{thm:multiple} to the function
\begin{align*}
F(x_0,\ldots, x_{n-K},x_n, x_{n+K}, \ldots, x_{N-1}) = f(x_n) \cdot \nabla h \!\left(v + \frac{1}{\sqrt{N}} \sum_{k \in [0,N)\setminus [n]_{N,K}} f(x_k)\,t \right),
\end{align*}
and two index sets \(I_1\le I_2\), where either \(I_1 = (0,\ldots,{n-K},n)\) and \(I_2 = (n+K,\ldots,N-1)\) (case \(n \ge K\)), or \(I_1 = (0,\ldots,{n-K})\) and \(I_2 = (n,n+K,\ldots,N-1)\) (case \(n < K\)). The function \(x_n \mapsto F(x_0,\ldots, x_{N-1})\) belongs to
\begin{align*}
\cH_{-}(d \Vert \nabla h \Vert_{\infty} c, \vartheta) \cap \cH_{+}(d \Vert \nabla h \Vert_{\infty} c, \vartheta),
\end{align*}
and for other indices \(r \neq n\), the function \(x_r \mapsto F(x_0,\ldots, x_{N-1})\) belongs to
\begin{align*}
\cH_{-}(N^{-\frac12}d^2\Vert f \Vert_{\infty} \Vert D^2 h \Vert_{\infty} c, \vartheta) \cap \cH_{+}(N^{-\frac12}d^2\Vert f \Vert_{\infty} \Vert D^2 h \Vert_{\infty} c, \vartheta).
\end{align*}
Hence, we see that \(F\) is \((I_1,I_2)\)-admissible with the same parameters
\begin{align*}
d^2c( \Vert \nabla h \Vert_{\infty}  + \Vert f \Vert_{\infty} \Vert D^2 h \Vert_{\infty} )
\quad\text{and}\quad \vartheta.
\end{align*}
By Theorem \ref{thm:multiple}, (A2) is satisfied with the value of $C_0$ given.
\end{proof}

\subsection{Multivariate normal approximation by P\`ene's method} In~\cite{Pene_2005}, P\`ene introduced a method of multivariate normal approximation based on the work of Rio~\cite{Rio_1996}; see also~\cite{Pene_2002,Pene_2004} for earlier, related, results by the same author. The theorem below is a special case of P\`ene's theorem applied to a map $T:X\to X$ preserving a probability measure~$\mu$. We write
\beqn
S_N = \sum_{k = 0}^{N-1} f^k.
\eeqn
Otherwise the notation is the same as in the previous section.

\begin{thm}\label{thm:Pene} Let $f:X\to\bR^d$ be a bounded measurable function with $\mu(f) = 0$. Suppose that there exist \(r \in \bZ_+\), \(C \ge 1\), \(M \ge \max\{ 1, \Vert f \Vert_{\infty} \}\) and a sequence of non-negative real numbers \( (\varphi_{p,l})_{p,l}\) such that the following conditions hold:
\begin{itemize}
\item[\bf (B1)] \(\varphi_{p,l} \le 1 \) and \( \sum_{p=1}^{\infty} p \max_{0 \le l \le \round{p/(r+1)}} \varphi_{p,l} < \infty\). \\

\item[\bf (B2)] For any integers \(a,b,c\) satisfying \(1 \le a + b + c \le 3\); for any integers \(i,j,k,p,q,l\) with \(0 \le i \le j \le k \le k + p \le k + p + q \le k + p + l\); for any \(\alpha, \beta, \gamma \in \{1,\ldots , d\}\); and for any bounded differentiable function \(G : \, \bR^d \times ([-M,M]^d)^3 \to \bR\) with bounded gradient,
\begin{align*}
&| {\operatorname{Cov_\mu}}[ G(S_{i},f^i,f^j,f^k),\, (f^{k+p}_{\alpha})^a(f^{k+p+q}_{\beta})^b(f^{k+p+l}_{\gamma})^c\,] | \le C(\Vert G \Vert_{\infty} + \Vert \nabla G \Vert_{\infty})\, \varphi_{p,l}.
\end{align*}
\end{itemize}
Then the limit
\beq\label{eq:Sigma_limit}
\Sigma = \lim_{N \to \infty} \mu(W_N \otimes W_N)
\eeq
exists. If \(\Sigma = 0\), then the sequence $(S_N)_{N\ge 0}$ is bounded in \(L^2(\mu)\). Otherwise there exists \(B > 0\) such that for any Lipschitz continuous function \(h: \, \bR^d \to \bR\),
\begin{align*}
| \mu(h(W_N)) - \Phi_{\Sigma}(h) | \le \frac{B\,\textnormal{Lip}(h)}{\sqrt{N}}
\end{align*}
for all $N\ge 1$.
\end{thm}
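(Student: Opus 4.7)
The plan is to follow the approach of P\`ene~\cite{Pene_2005}, which adapts Rio's block-and-swap technique~\cite{Rio_1996} to a dynamical systems setting via the triple-correlation coefficient $\varphi_{p,l}$ isolated in (B2). First I would verify existence of the limit in~\eqref{eq:Sigma_limit}: expand $\mu(W_N\otimes W_N) = N^{-1}\sum_{i,j<N}\mu(f^i\otimes f^j)$, use $T$-invariance of $\mu$ to rewrite it as $\mu(f\otimes f)+\sum_{n=1}^{N-1}(1-n/N)[\mu(f^n\otimes f)+\mu(f\otimes f^n)]$, and apply (B2) with $a=b=1$, $c=0$ and a constant test function $G$ to obtain $|\mu(f^n\otimes f)|\le C\varphi_{n,0}$, summable by~(B1). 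Passing to the limit gives $\Sigma$.

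For the degenerate case $\Sigma=0$, I would write $\mu(|S_N|^2)=N\,\tr\mu(W_N\otimes W_N)$ and use the tail estimate $\tr\Sigma-\tr\mu(W_N\otimes W_N)=O(N^{-1}\sum_{n\ge N}n\,\varphi_{n,0})$, which is $o(N^{-1})$ by the weighted summability in (B1). Hence $\mu(|S_N|^2)=O(1)$ and $(S_N)$ is bounded in $L^2(\mu)$.

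For the quantitative bound in the non-degenerate case, I would smooth $h$ by convolution with a Gaussian kernel of width $\varepsilon$ to produce $h_\varepsilon\in C^3$ with $\Vert D^k h_\varepsilon\Vert_\infty\le C\,\textnormal{Lip}(h)\,\varepsilon^{1-k}$, and then split $\{0,\ldots,N-1\}$ into alternating large blocks of size $p\asymp\sqrt N$ separated by gaps of size $\ell$. Introducing independent centred Gaussian vectors $Y_j$ whose covariance matches that of the block sums $U_j$, a telescoping Lindeberg swap replacing $U_j$ by $Y_j$ one block at a time expresses $\mu(h_\varepsilon(W_N))-\Phi_\Sigma(h_\varepsilon)$ as a sum of Taylor remainders: the first-order terms cancel by centring, the second-order terms are controlled by the covariance matching together with the tail of $\varphi_{p,l}$, and the third-order terms are precisely of the form handled by~(B2), because the Taylor remainder produces products $(f^{k+p}_\alpha)^a(f^{k+p+q}_\beta)^b(f^{k+p+l}_\gamma)^c$ with $a+b+c=3$ multiplied by a smooth function $G$ depending on $S_i$ and on intermediate observations $f^j,f^k$. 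Balancing $p$, $\ell$, and $\varepsilon$ yields the rate $B\,\textnormal{Lip}(h)/\sqrt N$.

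The main obstacle is the bookkeeping in the Lindeberg swap: one must verify that $\mu(U_j\otimes U_j)\approx (p/N)\Sigma$ with an error summable over the $N/p$ blocks (using the weighted decay in~(B1)), and that the cubic Taylor corrections, accumulated across all blocks, remain $O(\textnormal{Lip}(h)/\sqrt N)$ after being estimated by~(B2). Condition~(B2) is engineered exactly for this step: letting $G$ absorb both the history $S_i$ and the derivative factor coming from the Taylor expansion, while the triple product acts on the subsequent block, decomposes the remainder into terms whose cumulative contribution is controlled by $\sum_p p\max_{0\le l\le\round{p/(r+1)}}\varphi_{p,l}<\infty$, which is exactly what (B1) supplies.
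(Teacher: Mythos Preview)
The paper does not give its own proof of this theorem: it is quoted as a special case of P\`ene's result in~\cite{Pene_2005} (see the sentence ``The theorem below is a special case of P\`ene's theorem\dots''), and the paper's contribution in that section is Theorem~\ref{thm:Pene_billiards}, which verifies hypotheses~(B1)--(B2) for the billiard map via Theorem~\ref{thm:multiple}. Your sketch, which follows the Rio--P\`ene block/Lindeberg scheme from~\cite{Rio_1996,Pene_2005}, is therefore aligned with the intended reference rather than with anything proved in the present note; there is no alternative argument in the paper to compare against.
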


We proceed to the case of billiards:

\begin{thm}\label{thm:Pene_billiards}
Assume $f:X\to\bR^d$ is bounded, $\int f\,\rd\mu = 0$, and there exist constants~$c\ge 0$ and~$\vartheta\in(0,1)$ such that 
$
f_\alpha \in\cH_-(c,\vartheta)\cap\cH_+(c,\vartheta)
$
for all $\alpha\in\{1,\dots,d\}$. Then conditions (B1) and (B2) are satisfied with $\varphi_{p,l} = \theta^p$ and
\begin{align*}
C = M\frac{6d(c+1)\max\{1,\Vert f \Vert_{\infty}^5 \}}{1-\vartheta}.
\end{align*}
\end{thm}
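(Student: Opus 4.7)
Condition (B1) reduces to a routine summation: since $\theta\in(0,1)$, we have $\varphi_{p,l}=\theta^p\le 1$, and $\sum_{p\ge 1}p\,\max_l\varphi_{p,l}=\sum_{p\ge 1}p\,\theta^p=\theta/(1-\theta)^2<\infty$. So the plan is to focus on (B2), which I will obtain as a direct application of Theorem~\ref{thm:multiple} with $K=2$, in the same spirit as the proofs of Theorem~\ref{thm:multicorr_bound} and Theorem~\ref{thm:Stein_billiards}.

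The core idea is to rewrite the covariance in (B2) as a difference $\int F\,\rd\bfP_{I_1\cup I_2}-\int F\,\rd(\bfP_{I_1}\otimes\bfP_{I_2})$ for a suitable tensor $F$. I will take $I_1=(0,1,\dots,i,j,k)$ (with coincidences removed if $i,j,k$ repeat) and $I_2=(k+p,\,k+p+q,\,k+p+l)$, so that the gap is $\ell_1=p$. Define
$$F\bigl((z_r)_{r\in I_1\cup I_2}\bigr)=G\!\left(\sum_{s=0}^{i-1}f(z_s),\,f(z_i),\,f(z_j),\,f(z_k)\right)(f_\alpha(z_{k+p}))^a(f_\beta(z_{k+p+q}))^b(f_\gamma(z_{k+p+l}))^c.$$
By invariance of $\mu$ under $T$ and Fubini, integration of $F$ against $\bfP_{I_1\cup I_2}$ produces the first term in the covariance, while integration against $\bfP_{I_1}\otimes\bfP_{I_2}$ produces the product of the two expectations.

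The key technical step will be to verify that $F$ is $(I_1,I_2)$-admissible with uniform parameters. For $z_r$ with $r\in I_1$, applying the chain rule to $G$ together with $f_\alpha\in\cH_+(c,\vartheta)$ for each $\alpha$ will give a dynamical Hölder constant at most $d\,c\,\|\nabla G\|_\infty\max\{1,\|f\|_\infty^3\}$ along unstable manifolds (the last factor bounding the right-hand polynomial in $z_{I_2}$). For $z_r$ with $r\in I_2$, the product rule combined with the elementary estimate $|u^{a_m}-v^{a_m}|\le 3\|f\|_\infty^{a_m-1}|u-v|$ on $[-\|f\|_\infty,\|f\|_\infty]$ (valid since $a_m\le 3$) and $f_{\alpha_m}\in\cH_-(c,\vartheta)$ will yield a bound $3\,c\,\|G\|_\infty\max\{1,\|f\|_\infty^2\}$ along stable manifolds. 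Taking the maximum delivers a common Hölder constant $c_F\le 3dc(\|G\|_\infty+\|\nabla G\|_\infty)\max\{1,\|f\|_\infty^3\}$ and rate $\vartheta$, while $\|F\|_\infty\le\|G\|_\infty\max\{1,\|f\|_\infty^3\}$. Theorem~\ref{thm:multiple} then supplies
$$\bigl|\operatorname{Cov}_\mu[\,\cdot\,,\,\cdot\,]\bigr|\le M\!\left(\frac{c_F}{1-\vartheta}+\|F\|_\infty\right)\theta^p,$$
and the claimed $C$ drops out after absorbing $\|F\|_\infty$ into the first summand via $1\le(1-\vartheta)^{-1}$, estimating $3dc+1\le 6d(c+1)$, and relaxing $\max\{1,\|f\|_\infty^3\}$ to $\max\{1,\|f\|_\infty^5\}$.

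The principal obstacle will be the Hölder-constant bookkeeping: $F$ depends on up to $i+6$ variables, each carrying slightly different Hölder data, and the ``same parameters'' form of Theorem~\ref{thm:multiple} forces uniform upper bounds throughout. Degenerate configurations among $i,j,k$ or $q=0$, $l=q$ reduce to smaller index sets and require only trivial notational changes, while the case $p=0$ (where $I_1\le I_2$ fails) is dispatched by the trivial bound $|\operatorname{Cov}|\le 2\|G\|_\infty\max\{1,\|f\|_\infty^3\}$, easily absorbed into $C\cdot(\|G\|_\infty+\|\nabla G\|_\infty)\cdot\theta^0$.
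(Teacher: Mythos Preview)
Your proposal is correct and follows essentially the same route as the paper's proof: the same choice of $I_1=(0,\dots,i-1,i,j,k)$, $I_2=(k+p,k+p+q,k+p+l)$ and the same test function $F$, followed by the $(I_1,I_2)$-admissibility check and an appeal to Theorem~\ref{thm:multiple}. Your intermediate H\"older constants are in fact slightly sharper than the paper's (you get $\max\{1,\|f\|_\infty^2\}$ for the $I_2$-variables where the paper records $\max\{1,\|f\|_\infty^5\}$), and you are more explicit about the degenerate cases (coinciding indices and $p=0$) that the paper leaves implicit; otherwise the arguments coincide.
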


The result is due to P\`ene \cite{Pene_2005}, assuming piecewise H\"older continuous observables. The above version covers also dynamically H\"older continuous observables. But again, our intention here is to underline that the conditions of P\`ene's theorem are immediate consequences of Theorem~\ref{thm:multiple}.

\begin{proof}[Proof of Theorem~\ref{thm:Pene_billiards}]
Obviously $\varphi_{p,l} = \theta^p$ satisfies (B1). To establish condition (B2), define
\begin{align*}
&F(x_0,\ldots, x_{i-1}, x_i, x_j, x_k, x_{k+p}, x_{k+p+q}, x_{k+p+l}) \\
&= G\!\left(\sum_{m=0}^{i-1} f(x_m), f(x_i), f(x_j),f(x_k)\right)f_{\alpha}^a(x_{k+p})f_{\beta}^b(x_{k+p+q})f_{\gamma}^c(x_{k+p+l}).
\end{align*}
Then \(\Vert F \Vert_{\infty} \le \Vert G \Vert_{\infty} \Vert f \Vert_{\infty}^{a+b+c}\), and \(F\) is  \((I_1,I_2)\)-admissible, where \(I_1 = (0,\ldots,i-1,i,j,k)\) and \(I_2 = (k+p,k+p+q,k+p+l)\): For the indices \(r \in \{k+p,k+p+q,k+p+l\} \), since \(a,b,c\) are integers with \(1 \le a+b+c \le 3\), a simple computation shows that the function \(x_r \mapsto F(x_0,\ldots, x_{k+p+l})\) belongs to
\begin{align*}
\cH_{-}(\Vert G \Vert_{\infty}3\max\{1,\| f \|_{\infty}^5\}c, \vartheta) \cap \cH_{+}(\Vert G \Vert_{\infty}3\max\{1,\| f \|_{\infty}^5\}c, \vartheta).
\end{align*}
Moreover, for \(r \in \{0,\ldots,i-1,i,j,k\}\), the function \(x_r \mapsto F(x_0,\ldots, x_{k+p+l})\) is in
\begin{align*}
\cH_{-}(d\Vert \nabla G \Vert_{\infty}\max\{1,\| f \|_{\infty}^3\}c, \vartheta) \cap \cH_{+}(d\Vert \nabla G \Vert_{\infty}\max\{1,\| f \|_{\infty}^3\}c, \vartheta).
\end{align*}
Consequently, \(F\) is \((I_1,I_2)\)-admissible with the same parameters
\begin{align*}
3dc\max\{1,\Vert f \Vert_{\infty}^5 \}(\Vert G \Vert_{\infty} + \Vert \nabla G \Vert_{\infty})
\quad\text{and}\quad
\vartheta.
\end{align*}
Theorem \ref{thm:multiple} applied to \(F\) and \((I_1,I_2)\) now yields the estimate
\begin{align*}
&| {\operatorname{Cov_\mu}}[ G(S_{i},f^i,f^j,f^k),\, (f^{k+p}_{\alpha})^a(f^{k+p+q}_{\beta})^b(f^{k+p+l}_{\gamma})^c\,] | \\
&\le M\!\left(\frac{3dc\max\{1,\Vert f \Vert_{\infty}^5 \}(\Vert G \Vert_{\infty} + \Vert \nabla G \Vert_{\infty})}{1-\vartheta} + \Vert F \Vert_{\infty}\right)\!\theta^{p} \\
&\le M\!\left(\frac{3dc\max\{1,\Vert f \Vert_{\infty}^5 \}(\Vert G \Vert_{\infty} + \Vert \nabla G \Vert_{\infty})}{1-\vartheta} + \Vert G \Vert_{\infty} \Vert f \Vert_{\infty}^{a+b+c}\right)\!\theta^{p} \\
&\le M\frac{6d(c+1)\max\{1,\Vert f \Vert_{\infty}^5 \}}{1-\vartheta}(\Vert G \Vert_{\infty} + \Vert \nabla G \Vert_{\infty})\theta^p.
\end{align*}
Hence, (B2) holds with the value of~$C$ given.
\end{proof}

\subsection{Vector-valued almost sure invariance principle by Gou\"ezel's method}
In this section we present an application of Theorem~\ref{thm:multiple} to multivariate almost sure limits.

The following theorem is due to Gou\"ezel \cite{Gouezel_2010}.
\begin{thm}\label{thm:Gouezel}
Let \(f: \, X \to \bR^d\) be a bounded measurable function with \(\mu(f) = 0\). Given integers $n>0$, $m>0$, $0\leq b_1<b_2<\dots<b_{n+m+1}$, $k\geq 0$, and vectors $t_1,\dots,t_{n+m}\in\bR^d$, set
\beqn
X_{n,m}^{(k)} = \sum_{j=n}^m t_j\cdot\sum_{i=b_j+k}^{b_{j+1}-1+k}f^i
\eeqn
for brevity.  Now, suppose there exist constants $t>0$, $C>0$, $C'>0$ and $\theta\in(0,1)$ such that
\beq\label{eq:Gouezel}
\begin{split}
& \left|\mu\!\left(e^{\imag X_{1,n}^{(0)}+\imag X_{n+1,n+m}^{(k)}}\right)-\mu\!\left(e^{\imag X_{1,n}^{(0)}}\right)\! \mu\!\left(e^{\imag X_{n+1,n+m}^{(k)}}\right)\right|
\le C\theta^k\!\left(1+\max_{1\leq j\leq n+m}|b_{j+1}-b_j|\right)^{C'(n+m)}
\end{split}
\eeq
holds for all choices of the numbers $n$, $m$, $b_j$, $k>0$, and all vectors $t_j$ satisfying $|t_j|<t$.
Then 
\begin{enumerate}
\item 
Equation \eqref{eq:Sigma}
yields a well-defined, symmetric, positive-semidefinite, $d\times d$ matrix~$\Sigma$.
\medskip
\item The matrix $\Sigma$ satisfies~\eqref{eq:Sigma_limit}.
\medskip
\item $W_N$ converges in distribution to~$\cN(0,\Sigma)$.
\medskip
\item Given any $\lambda>\frac14$, there exists a probability space together with two $\bR^d$-valued processes $(Y_n)_{n\geq 0}$ and $(Z_n)_{n\geq 0}$ such that
\medskip
\begin{enumerate}
\item $(f^n)_{n\geq 0}$ and $(Y_n)_{n\geq 0}$ have the same distribution.
\medskip
\item The random vectors $Z_n\sim\cN(0,\Sigma)$ are independent.
\medskip
\item Almost surely, $|\sum_{k=0}^{n-1} Y_k - \sum_{k=0}^{n-1} Z_k| = o(n^\lambda)$.  
\end{enumerate}
\end{enumerate}
\end{thm}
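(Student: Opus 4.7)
\medskip
\noindent\textbf{Plan of proof.} The plan is to exploit the decoupling estimate~\eqref{eq:Gouezel} as a Fourier-analytic tool that reduces $W_N$ to a sum of nearly independent block contributions, derive the central limit theorem via a standard big-block/small-gap Bernstein partition, and finally promote this to the almost sure invariance principle by a Gaussian approximation combined with a Skorokhod embedding.

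For items (1) and (2) I would first specialize~\eqref{eq:Gouezel} to the case $n=m=1$ with singleton blocks $b_1=0$, $b_2=1$, $b_3=1+k$, so that $X_{1,1}^{(0)}=t_1\!\cdot\! f$ and $X_{2,2}^{(k)}=t_2\!\cdot\! f^{1+k}$. Differentiating the resulting estimate twice at $t_1=t_2=0$, with Cauchy's formula on a polydisk of radius comparable to $t$ justifying the derivative bound, yields a pair covariance bound of the form $|\mu(f_\alpha\cdot f_\beta^{1+k})|\le C'\theta^k$. Absolute summability then makes the series in~\eqref{eq:Sigma} convergent, giving~(1), and the identity $\mu(W_N\otimes W_N)=\sum_{|k|<N}(1-|k|/N)\mu(f\otimes f^k)$ combined with dominated convergence delivers~(2).

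Next, for item~(3), I would partition $\{0,\ldots,N-1\}$ into blocks of length $L=L(N)$ separated by gaps of length $G=\lceil C\log N\rceil$, tuning $L$ so that $N/L\to\infty$ while the polynomial factor $L^{C'(n+m)}\theta^G$ on the right-hand side of~\eqref{eq:Gouezel} vanishes. Iterated use of~\eqref{eq:Gouezel}, with each $t_j$ replaced by $t/\sqrt N$, would factorize $\mu(e^{\imag t\cdot W_N})$ across blocks up to vanishing error, and a Taylor expansion inside each block, combined with the covariance estimates from the previous step, produces the Gaussian limit $\exp(-\tfrac12 t\cdot\Sigma t)$.

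The hard part will be item~(4). My plan follows the Berkes--Philipp paradigm as refined by Gou\"ezel: approximate the joint law of consecutive block sums $B_j$ by that of independent centered Gaussians $\tilde B_j\sim\cN(0,L\Sigma)$, controlling the total-variation error by Fourier-inverting~\eqref{eq:Gouezel}; then realize the $\tilde B_j$ as increments of a single Brownian motion through a Skorokhod-type embedding on an enlarged probability space; and finally telescope the discrepancies across the $O(N/L)$ blocks. The exponent $\lambda>\frac14$ should emerge from an optimal balance between the block length $L$, the gap length $G$, and the polynomial growth factor $(1+\max|b_{j+1}-b_j|)^{C'(n+m)}$ in~\eqref{eq:Gouezel}. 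The most delicate point is obtaining a Gaussian approximation on each block with error small enough to survive summation over all $O(N/L)$ blocks, uniformly across the dyadic scales at which the Skorokhod construction must be iterated; this coupling argument and the accompanying bookkeeping is where the real work lies, and is ultimately what pins down the rate $\lambda>\tfrac14$.
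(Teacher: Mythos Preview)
The paper does not prove this theorem at all: it is quoted verbatim from Gou\"ezel's article \cite{Gouezel_2010} and used as a black box. The only work the present paper does in this section is to verify the hypothesis~\eqref{eq:Gouezel} for the billiard map, which is the content of the theorem immediately following. So there is no ``paper's own proof'' to compare your attempt against.

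That said, your outline is broadly faithful to Gou\"ezel's original strategy --- Bernstein big-block/small-block partitioning, iterated use of the characteristic-function decoupling to factorize, and a Berkes--Philipp-type coupling to independent Gaussians for the ASIP --- so as a sketch of how one would prove the cited result it is reasonable. One step, however, is not justified as written: your derivation of the covariance bound in items~(1)--(2) invokes Cauchy's formula ``on a polydisk of radius comparable to~$t$'', but the estimate~\eqref{eq:Gouezel} is stated only for \emph{real} $t_j$ with $|t_j|<t$, so there is no a priori bound on a complex polydisk to feed into Cauchy. The repair (and this is essentially what Gou\"ezel does) uses that $f$ is bounded: the characteristic functions then extend to entire functions of exponential type, and one combines the real-line bound from~\eqref{eq:Gouezel} with this growth control via a Phragm\'en--Lindel\"of/Bernstein-type argument to transfer the bound to a complex strip before differentiating. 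Without this extra ingredient, a uniform $L^\infty$ bound on a real ball gives no control on second derivatives at the origin, and your covariance estimate would not follow.
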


Such a theorem has a multitude of interesting consequences, including the central limit theorem (CLT), weak invariance principle, almost sure CLT, law of the iterated logaritm (LIL), Strassen's functional LIL, an upper and lower class refinement of the LIL, and an upper and lower class refinement of Chung’s LIL. We refer the reader to~\cite{Strassen_1964,Billingsley_convergence,PhilippStout_1975,LaceyPhilipp_1990,MelbourneNicol_2009} for more details concerning the implications.

We proceed to check condition~\eqref{eq:Gouezel} in the case of billiards. This was done by direct means in~\cite{Stenlund_2012}. To our knowledge, the resulting vector-valued almost sure invariance principle comes with the smallest error and covers the broadest class of observables to date. Here we show condition~\eqref{eq:Gouezel} to be an immediate consequence of Theorem~\ref{thm:multiple}.

\begin{thm}
Assume $f:X\to\bR^d$ is bounded, $\int f\,\rd\mu = 0$, and there exist constants~$c\ge 0$ and~$\vartheta\in(0,1)$ such that 
$
f_\alpha \in\cH_-(c,\vartheta)\cap\cH_+(c,\vartheta)
$
for all $\alpha\in\{1,\dots,d\}$. Given $t>0$,
\begin{align*}
& \left|\mu\!\left(e^{\imag X_{1,n}^{(0)}+\imag X_{n+1,n+m}^{(k)}}\right)-\mu\!\left(e^{\imag X_{1,n}^{(0)}}\right)\! \mu\!\left(e^{\imag X_{n+1,n+m}^{(k)}}\right)\right| 
\le \sqrt{2}M\!\left(\frac{t\sqrt{d}c}{1-\vartheta} + 1 \right)\!\theta^{k}
\end{align*}
holds for all choices of the numbers $n$, $m$, $b_j$, $k>0$, and all vectors $t_j$ satisfying $|t_j|<t$.
\end{thm}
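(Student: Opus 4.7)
The plan is to reduce this statement to a direct application of Theorem~\ref{thm:multiple} with $K=2$. First, I would identify the time intervals underlying the two sums: $X_{1,n}^{(0)}$ is a function of $\{T^ix : b_1\le i\le b_{n+1}-1\}$, while $X_{n+1,n+m}^{(k)}$ is a function of $\{T^ix : b_{n+1}+k\le i\le b_{n+m+1}-1+k\}$. Accordingly, one takes
\beqn
I_1 = (b_1, b_1+1,\ldots,b_{n+1}-1), \qquad I_2 = (b_{n+1}+k, b_{n+1}+1+k,\ldots, b_{n+m+1}-1+k),
\eeqn
whose gap is $\ell_1 = k+1 \ge k$. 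Next, I would introduce a bounded complex-valued test function $F : X^{p_2}\to\bC$ that is the product of complex exponentials of $t_j\cdot f(x_r)$ over all indices, grouped into blocks according to the $b_j$'s. With this construction, $\int F\,\rd\bfP_{I_1\cup I_2}$ equals the joint expectation $\mu(e^{\imag X_{1,n}^{(0)} + \imag X_{n+1,n+m}^{(k)}})$, and $\int F\,\rd(\bfP_{I_1}\otimes\bfP_{I_2})$ factors exactly as the product $\mu(e^{\imag X_{1,n}^{(0)}})\,\mu(e^{\imag X_{n+1,n+m}^{(k)}})$.

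The key verification is admissibility with the correct constants. Each variable $x_r$ appears in $F$ through a \emph{single} factor $e^{\imag t_{j(r)}\cdot f(x_r)}$, while every other factor has modulus one. Combining the Lipschitz bound $|e^{\imag\alpha}-e^{\imag\beta}|\le|\alpha-\beta|$ with $|f(y)-f(z)| \le \sqrt{d}\,\max_\alpha|f_\alpha(y)-f_\alpha(z)|$, the assumption $|t_j|<t$, and the hypothesis $f_\alpha\in\cH_-(c,\vartheta)\cap\cH_+(c,\vartheta)$, one obtains
\beqn
x_r\mapsto F(x_1,\ldots,x_{p_2}) \in \cH_+(t\sqrt{d}\,c,\vartheta)\cap\cH_-(t\sqrt{d}\,c,\vartheta)
\eeqn
for every $r$. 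Hence both $\Re F$ and $\Im F$ are $(I_1,I_2)$-admissible with the uniform parameters $t\sqrt{d}\,c$ and $\vartheta$, and each has sup norm at most one.

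Finally, I would apply Theorem~\ref{thm:multiple} separately to $\Re F$ and $\Im F$ and combine the two estimates via $|a+\imag b| \le \sqrt{2}\max(|a|,|b|)$; using $\theta^{\ell_1}\le\theta^k$, this gives exactly the asserted bound. The only nontrivial point --- and the one I expect to be the main conceptual obstacle --- is noticing that the complex-exponential structure of $F$ localizes the dependence on each $x_r$ to a single modulus-one factor, so that the H\"older constants do \emph{not} compound multiplicatively across the many variables; past that observation, the argument is an entirely mechanical application of the functional correlation bound.
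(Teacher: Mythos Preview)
Your proposal is correct and follows essentially the same approach as the paper: the paper likewise takes $I_1$ and $I_2$ to be the (contiguous) index blocks underlying $X_{1,n}^{(0)}$ and $X_{n+1,n+m}^{(k)}$, defines $F$ as the complex exponential, verifies $(I_1,I_2)$-admissibility with parameters $t\sqrt{d}\,c$ and $\vartheta$ via $|e^{\imag a}-e^{\imag b}|\le|a-b|$ and Cauchy--Schwarz, and then invokes Theorem~\ref{thm:multiple} (implicitly on the real and imaginary parts, which accounts for the $\sqrt{2}$). Your observation that the gap is $\ell_1=k+1\ge k$ and your explicit handling of $\Re F$, $\Im F$ make the argument slightly more transparent, but the substance is identical.
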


\begin{proof}
Let  \(I_1= ( b_j,\ldots,b_{j+1}-1 \: : \: 1 \le j \le n )\) and \(I_2= ( b_j+k,\ldots,b_{j+1}-1+k \: : \: n+1 \le j \le n+m )\).
Define the function
\begin{align*}
&F(x_{b_1},\ldots, x_{b_2-1},\ldots, x_{b_{n+m}+k},\ldots, x_{b_{n+m+1}-1+k} ) \\
&= \exp\!\left(\imag \sum_{j=1}^n t_j\cdot\sum_{i=b_j}^{b_{j+1}-1}f(x_i) + \imag \sum_{j=n+1}^{n+m} t_j\cdot\sum_{i=b_j+k}^{b_{j+1}-1+k}f(x_i)\right).
\end{align*}
Then \(F\) is $(I_1,I_2)$-admissible with the same parameters $t\sqrt{d}c$ and $\vartheta$. Indeed, for all indices \(r\),
\begin{align*}
x_r \mapsto F(x_{b_1},\ldots, x_{b_{n+m+1}-1+k}) \in \cH_{-}(t\sqrt{d}c, \vartheta) \cap \cH_{+}(t\sqrt{d}c, \vartheta).
\end{align*}
To see this, recall that $|e^{\imag a} - e^{\imag b}| \le  |a-b|$ for all $a,b\in\bR$.
Thus, if say \(r = b_1\) and \(x \in W^u(y)\),
\begin{align*}
&|F(x,x_{b_1+1},\ldots, x_{b_{n+m+1}-1+k}) - F(y,x_{b_1+1},\ldots, x_{b_{n+m+1}-1+k})| \\
&\le |t_1 \cdot f(x) - t_1 \cdot f(y)| 
\le |t_1| \left(\sum_{1\le\alpha\le d} |f_\alpha(x) - f_\alpha(y)|^2\right)^{1/2}
\le t\sqrt{d} c\,\vartheta^{s_+(x,y)}.
\end{align*}
The other indices and local stable manifolds are treated similarly. Theorem~\ref{thm:multiple} now yields
\begin{align*}
& \left|\mu\!\left(e^{\imag X_{1,n}^{(0)}+\imag X_{n+1,n+m}^{(k)}}\right)-\mu\!\left(e^{\imag X_{1,n}^{(0)}}\right)\! \mu\!\left(e^{\imag X_{n+1,n+m}^{(k)}}\right)\right| 
\le \sqrt{2}M\!\left(\frac{t\sqrt{d}c}{1-\vartheta} + \|F\|_\infty \right)\!\theta^{k}.
\end{align*}
Since $\|F\|_\infty = 1$, the proof is complete.
\end{proof}


\section{Proofs of Theorems~\ref{thm:pair} and~\ref{thm:multiple}}\label{sec:proofs}
We begin by recalling three facts from the theory of billiards, which are necessary for the proofs of the theorems. We refer the reader to the standard textbook~\cite{ChernovMarkarian_2006} for more details.

\begin{lem}\label{lem:measurable_partition}
The space $(X,{\rm Borel},\mu)$ is a standard probability space, and the family $\xi = \{\xi_q\,:\,q\in\cQ\}$ of local unstable manifolds is a measurable partition of it. Here~$\cQ$ is an uncountable index set. Thus, the measure $\mu$ admits a disintegration
\beqn
\mu = \int_\cQ \nu_q\,\rd\lambda(q),
\eeqn
where the $\{\nu_q\,:\,q\in\cQ\}$ is a system of conditional probability measures of~$\mu$ on~$\xi$, with $\nu_q(\xi_q) = 1$ almost surely, and $\lambda$ is a factor probability measure on~$\cQ$. 
\end{lem}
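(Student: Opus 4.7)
The plan is to reduce the claim to two classical pillars: (a) the fact that any Borel probability measure on a Polish space yields a standard probability space, and (b) Rokhlin's disintegration theorem for measurable partitions of a standard probability space. The only substantive verification is that the partition $\xi$ into local unstable manifolds is measurable in Rokhlin's sense.

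\textbf{Step 1 (standardness).} The space $X = \bigsqcup_{m=1}^M X_m$ is a finite disjoint union of compact cylinders, hence a compact metrizable (in particular Polish) space. The measure $\mu$ is a Borel probability measure on this Polish space; any such measure space is standard after the usual completion. So $(X,\mathrm{Borel},\mu)$ is standard.

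\textbf{Step 2 (measurability of $\xi$).} A partition of a standard probability space is measurable in Rokhlin's sense if and only if it admits a countable family of measurable saturated sets that separates distinct atoms modulo null sets. To produce such a family for $\xi$, I would exploit the dynamical description of local unstable manifolds: two points $x,y$ lie on the same local unstable manifold precisely when $s_+(T^{-n}x,T^{-n}y) = +\infty$ for some $n \ge 0$, and the separation time is defined through the countable collection of homogeneity strips $X_{m,k}$. Consequently, the $\xi$-saturated $\sigma$-algebra is generated by countably many measurable ``dynamical cylinders'' built from preimages of homogeneity strips, which gives the separating family. Alternatively, one can simply cite the construction of unstable manifolds as measurable partitions in Chernov--Markarian \cite{ChernovMarkarian_2006}, where this is carried out in detail; restricting to the full-measure set on which nontrivial local unstable manifolds exist takes care of any exceptional set.

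\textbf{Step 3 (disintegration).} With Steps 1 and 2 in hand, Rokhlin's disintegration theorem supplies the factor space $\cQ = X/\xi$, a factor probability measure $\lambda$ on $\cQ$, and a $\lambda$-a.e.\ defined family $\{\nu_q\}_{q\in\cQ}$ of conditional probability measures satisfying $\nu_q(\xi_q)=1$ and $\mu = \int_\cQ \nu_q \, \rd\lambda(q)$ on Borel sets. The uncountability of $\cQ$ follows from the fact that $\mu$ is nonatomic while $\nu_q$ is supported on a one-dimensional curve.

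The main (and really only) obstacle is Step 2: the local unstable manifolds are geometric, defined only almost everywhere, and have variable length, so one must take care to exhibit the Rokhlin-measurable structure. Since this is worked out explicitly in the standard reference \cite{ChernovMarkarian_2006}, the lemma is obtained essentially by assembling quotations, and no new computation is required.
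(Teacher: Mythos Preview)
Your proposal is correct and matches the paper's treatment: the paper does not prove this lemma at all but simply states it as a known fact from billiard theory and refers the reader to \cite{ChernovMarkarian_2006}, which is exactly what you do after sketching the standard Rokhlin-type argument. One minor slip: in Step~2 your dynamical characterization of being on the same local \emph{unstable} manifold should involve the past separation time $s_-$ (backward orbits never separate), not $s_+$; but since you immediately defer to the textbook construction this does not affect the validity of the outline.
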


\begin{lem}\label{lem:billiards_pushforward}
There exist system constants $a_0>0$, $M_0>0$ and $\theta_0\in(0,1)$ such that the following holds.
Suppose $G\in\cH_-(c,\vartheta)$. Then,
\beqn
\left|\int_{\xi_q} G\circ T^n\,d\nu_q - \int G\,d\mu\right| \le M_0(c + \|G\|_\infty) \max(\theta_0,\vartheta)^{\frac12(n-a_0|{\log|\xi_q|}|)}
\eeqn
for all~$n\ge 0$ and~$q\in\cQ$. Here $|\xi_q|$ stands for the length of the local unstable manifold~$\xi_q$.
\end{lem}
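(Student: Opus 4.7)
The plan is to prove Lemma~\ref{lem:billiards_pushforward} by combining two standard tools from the Chernov--Markarian theory: the growth lemma for unstable curves, and the coupling lemma giving exponential equidistribution of ``standard pairs'' against the invariant measure $\mu$ for $\cH_-$ observables. The key point is that $\xi_q$ may be too short to be a standard pair, so one must spend $O(|{\log|\xi_q||})$ iterations letting the dynamics expand it to scale $\delta_0$, and then run the equidistribution argument for the remaining time.

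Concretely, I would split the time into an expansion window of length $n_0 := \lceil a_0|{\log|\xi_q||}\rceil$ and a mixing window of length $n-n_0$, where $a_0>0$ is to be fixed later. Pushing $\nu_q$ forward by $T^{n_0}$, the image of $\xi_q$ decomposes (modulo a null set) into H-components $\{W_\alpha\}$: maximal smooth unstable curves each contained in a single homogeneity strip. The measure $T^{n_0}_*\nu_q$ then disintegrates along this partition into conditional densities $\tilde\nu_\alpha$ on $W_\alpha$ whose arclength densities have uniformly bounded distortion.

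The growth lemma supplies system constants $\Lambda>1$, $\theta_0\in(0,1)$, $\delta_0>0$, and $C>0$ such that the mass of $T^{n_0}_*\nu_q$ supported on H-components of length below $\delta_0$ is bounded by $C(\theta_0^{n_0}+|\xi_q|\Lambda^{n_0})$. Choosing $a_0$ small enough that $\Lambda^{a_0}\le\theta_0^{-1/2}$ makes this bound $O(\theta_0^{n_0/2})$, so the total contribution of short components to $|\int G\circ T^n\,d\nu_q-\int G\,d\mu|$ is at most $O(\|G\|_\infty\,\theta_0^{n_0/2})$ by the trivial pointwise bound on $G$. On each long component $(W_\alpha,\tilde\nu_\alpha)$ I would invoke Chernov's coupling lemma: after $n-n_0$ more iterations, $\tilde\nu_\alpha$ can be coupled with $\mu$ so that outside a set of mass $O(\theta_0^{n-n_0})$, paired points land on a common local stable manifold with $s_-\ge n-n_0$. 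The $\cH_-(c,\vartheta)$ hypothesis then converts stable closeness into pointwise closeness of $G$, yielding
\[
\left|\int_{W_\alpha}\!G\circ T^{n-n_0}\,d\tilde\nu_\alpha-\int G\,d\mu\right|\le C(c+\|G\|_\infty)\max(\theta_0,\vartheta)^{n-n_0}.
\]
Summing over $\alpha$, adding the short-component estimate, and substituting $n_0=a_0|{\log|\xi_q||}$ leaves a bound of the form $M_0(c+\|G\|_\infty)\max(\theta_0,\vartheta)^{\tfrac12(n-a_0|{\log|\xi_q||})}$, where the factor $\tfrac12$ arises from balancing the exponent $n-n_0$ against the $\theta_0^{n_0/2}$ remainder from the short components.

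The main obstacle is, of course, the coupling lemma itself, which is the deep technical ingredient; I would invoke it from Chernov--Markarian~\cite{ChernovMarkarian_2006} (Ch.~7--8) together with the extension to $\cH_-$ observables in~\cite{Chernov_2006,Stenlund_2010}. A secondary technicality is the exact form of the growth lemma used: one needs the version that quantifies the mass on short H-components in terms of both an intrinsic $\theta_0^{n_0}$-decay and a curvature/length term scaling with $|\xi_q|\Lambda^{n_0}$, so that the choice of $a_0$ really does absorb the length dependence.
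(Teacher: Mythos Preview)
The paper does not actually prove Lemma~\ref{lem:billiards_pushforward}. It is one of three ``facts from the theory of billiards'' that the authors simply recall from the literature, referring the reader to Chernov--Markarian~\cite{ChernovMarkarian_2006} for details; no argument is given in the paper beyond that citation. So there is no in-paper proof to compare against.

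Your proposal is therefore \emph{more} than what the paper supplies: it is a sketch of the underlying argument from the cited sources. The overall scheme---iterate for a time $n_0\asymp |{\log|\xi_q|}|$ to bring the (possibly very short) unstable manifold up to a proper standard family, then invoke the coupling/equidistribution lemma for the remaining $n-n_0$ steps and read off the $\cH_-$ bound---is exactly how this estimate is obtained in \cite{ChernovMarkarian_2006,Chernov_2006,Stenlund_2010}, and the balancing of the two windows is precisely what produces the exponent $\tfrac12(n-a_0|{\log|\xi_q|}|)$.

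One small technical wrinkle: the specific growth-lemma bound you quote, $C(\theta_0^{n_0}+|\xi_q|\Lambda^{n_0})$, is not the standard formulation. The usual statement controls the $\mathcal Z$-function by $\mathcal Z_{n_0}\le c_1\theta_0^{n_0}\mathcal Z_0+c_2$ with $\mathcal Z_0\asymp |\xi_q|^{-1}$, so the relevant remainder is of order $\theta_0^{n_0}/|\xi_q|$ rather than $|\xi_q|\Lambda^{n_0}$. In practice one does not bound the short H-components separately at all: after $n_0\sim a_0|{\log|\xi_q|}|$ iterations the pushforward is a \emph{proper} standard family (bounded $\mathcal Z$-function), and the coupling lemma applies to it wholesale. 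This gives the same final estimate with slightly cleaner bookkeeping, and is how the references you cite actually proceed.
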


\begin{lem}\label{lem:billiards_tail}
There exists a system constant $M_1>0$ such that
\beqn
\int_\cQ |\xi_q|^{-1} \,d\lambda(q) \le M_1.
\eeqn
Moreover,
\beqn
\lambda(\{q\in\cQ\,:\, |\xi_q| \le \ve\}) \le M_1\ve
\eeqn
for all $\ve>0$.
\end{lem}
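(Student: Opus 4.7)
Both bounds are structural facts about the SRB conditional measures on the unstable foliation of a 2D dispersing billiard with finite horizon, and my strategy is to reduce them to the standard \emph{growth lemma} proved in \cite{ChernovMarkarian_2006}, combined with an elementary measure-theoretic identity.

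First, I would exploit that $\nu_q$ is a probability measure for $\lambda$-a.e.\ $q$. Fubini applied to the disintegration $\mu = \int \nu_q\,d\lambda(q)$ then gives, for every measurable $A\subset\cQ$,
\beqn
\lambda(A) = \int_A \nu_q(\xi_q)\,d\lambda(q) = \mu\!\left(\bigcup_{q\in A}\xi_q\right).
\eeqn
Applied to $A_\ve=\{q:|\xi_q|\le\ve\}$, this identifies $\lambda(A_\ve)$ with the $\mu$-measure of the union of unstable manifolds of length at most $\ve$. The latter quantity is exactly what the growth lemma for dispersing billiards (see \cite{ChernovMarkarian_2006}) controls linearly in $\ve$: preimages under $T^{-n}$ of the singularity curves and the homogeneity-strip boundaries shrink exponentially in $n$, and summing the contributions over $n\ge 0$ yields a bound of the form $\mu(\{x : |\xi_{q(x)}| \le \ve\}) \le C\ve$. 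This gives the second claim with $M_1 = C$.

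For the first bound, I would apply the layer-cake representation
\beqn
\int_\cQ |\xi_q|^{-1}\,d\lambda(q) = \int_0^\infty \lambda(\{q:|\xi_q|<1/s\})\,ds,
\eeqn
and combine the tail estimate just obtained with a sharper, quantified form of the growth lemma together with the bounded distortion of the SRB conditional densities. The main obstacle I anticipate is precisely a borderline issue: the linear tail $\lambda(A_\ve)\lesssim\ve$ on its own is just barely insufficient to ensure integrability of $|\xi_q|^{-1}$, so one must appeal to a strengthened version of the growth lemma that tracks the exponential shrinking of preimages of the singular set and homogeneity-strip boundaries (the rate is strictly better than linear once one decouples across homogeneity strips). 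Since the whole statement is a standard structural fact, in an actual writeup I would simply cite the appropriate lemmas in \cite{ChernovMarkarian_2006} and verify that the disintegration identity above transfers the bounds from $\mu$ to $\lambda$.
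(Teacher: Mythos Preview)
The paper does not prove this lemma at all: it is one of the ``three facts from the theory of billiards'' stated at the start of Section~\ref{sec:proofs} and simply attributed to~\cite{ChernovMarkarian_2006}. Your ultimate plan --- to cite the relevant lemmas in~\cite{ChernovMarkarian_2006} after checking that the disintegration identity $\lambda(A)=\mu(\cup_{q\in A}\xi_q)$ transfers the bounds --- is therefore exactly what the paper does, and for the second inequality your reduction to the growth-lemma tail bound $\mu(\{x:|W^u(x)|\le\ve\})\le C\ve$ is correct.

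For the first inequality, however, your sketched mechanism is off. You correctly observe that the linear tail $\lambda(\{|\xi_q|\le\ve\})\le M_1\ve$ is, by itself, insufficient for $\int|\xi_q|^{-1}\,d\lambda<\infty$ via layer-cake. But your proposed remedy --- a ``strengthened growth lemma'' giving a strictly better-than-linear tail for $|\xi_q|$ --- is not how this is handled in~\cite{ChernovMarkarian_2006}, and in fact no such improved tail is available in general. The actual argument there uses a \emph{different} function: one proves the linear tail $\mu(\{x:r^u(x)<\ve\})\le C\ve$ for the distance $r^u(x)$ from~$x$ to the nearest endpoint of $W^u(x)$, and then combines this with bounded distortion of the SRB conditional densities. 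Since $\nu_q(\{r^u<\ve\})\ge c_0\,\ve/|\xi_q|$ whenever $2\ve\le|\xi_q|$, integrating over~$q$ gives
\[
C\ve \;\ge\; \mu(r^u<\ve) \;\ge\; c_0\,\ve \int_{\{|\xi_q|>2\ve\}} |\xi_q|^{-1}\,d\lambda(q),
\]
and letting $\ve\to 0$ yields the claim. This is precisely the equivalence between finiteness of the $Z$-function and integrability of $|W_\alpha|^{-1}$ for standard families in~\cite{ChernovMarkarian_2006}. So: same destination (a citation), but if you intend to include a sketch, replace the nonexistent super-linear tail by the $r^u$-tail plus distortion argument.
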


\bigskip
\bigskip
Next, let us recall a simple lemma.
\begin{lem}
Let $F:X^p\to\bR$ be a function, with $p\ge 1$ arbitrary. Then the identity
\beq\label{eq:telescope}
F(x_1,\dots,x_p) - F(y_1,\dots,y_p) = \sum_{r=1}^p\, [F(x_{<r},x_r,y_{>r}) - F(x_{<r},y_r,y_{>r})]
\eeq
holds for all $(x_1,\dots,x_p),(y_1,\dots,y_p)\in X^p$. Here we have denoted $x_{<r} = (x_1,\dots,x_{r-1})$ and $y_{>r} = (y_{r+1},\dots,y_p)$, with the agreement that $x_{<1} = y_{>p} = \emptyset$.
\end{lem}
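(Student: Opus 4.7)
The plan is to recognize the right-hand side as a telescoping sum. Concretely, I would introduce the interpolating points
\[
z^{(r)} = (x_1,\dots,x_{r-1},y_r,y_{r+1},\dots,y_p) = (x_{<r},y_r,y_{>r}), \qquad r = 1,\dots,p+1,
\]
where by convention $z^{(1)} = (y_1,\dots,y_p)$ (since $x_{<1}=\emptyset$) and $z^{(p+1)} = (x_1,\dots,x_p)$ (since $y_{>p}=\emptyset$ and the point $y_{p+1}$ does not actually appear). Note that $z^{(p+1)}$ can equally well be written as $(x_{<r},x_r,y_{>r})$ with $r=p$, up to reindexing.

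The key observation is then the identity $F(x_{<r},x_r,y_{>r}) = F(z^{(r+1)})$, because changing the $r$-th coordinate of $z^{(r)}$ from $y_r$ to $x_r$ yields exactly $z^{(r+1)}$; and of course $F(x_{<r},y_r,y_{>r}) = F(z^{(r)})$ by definition. The $r$-th summand on the right of~\eqref{eq:telescope} is therefore $F(z^{(r+1)}) - F(z^{(r)})$. Summing over $r=1,\dots,p$ collapses the telescope to $F(z^{(p+1)}) - F(z^{(1)}) = F(x_1,\dots,x_p) - F(y_1,\dots,y_p)$, which is precisely the left-hand side.

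There is essentially no obstacle: this is a purely algebraic identity with no dynamical or measure-theoretic content. The only thing to be careful about is handling the boundary conventions $x_{<1} = y_{>p} = \emptyset$ cleanly, so that the first and last terms of the telescope really do produce $F(y_1,\dots,y_p)$ and $F(x_1,\dots,x_p)$ respectively. The lemma will subsequently be applied in the proofs of Theorems~\ref{thm:pair} and~\ref{thm:multiple} to reduce an estimate on the difference of $F$-values with all coordinates changing to a sum of estimates where only one coordinate changes at a time, which is where the separate dynamical Hölder hypotheses on each argument of $F$ will enter.
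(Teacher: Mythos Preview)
Your proof is correct and is essentially the same argument as the paper's: both exploit the telescoping structure of the sum. The only cosmetic difference is that the paper phrases it as an induction on~$p$ (splitting off the $r=p$ term and applying the identity to the function $\tilde F(\cdot) = F(\cdot,y_p)$ of $p-1$ variables), whereas you write the telescope out directly via the interpolants $z^{(r)}$; neither version has any advantage over the other.
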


\begin{proof}
The claim is tautological for $p=1$. For $p>1$, the induction step
\beqn
\begin{split}
& F(x_1,\dots,x_p) - F(y_1,\dots,y_p)
\\ 
=\ & F(x_{<p},x_p) - F(x_{<p},y_p) + F(x_{<p},y_p) + F(y_{<p},y_p)
\\
=\ & F(x_{<p},x_p,y_{>p}) - F(x_{<p},y_p,y_{>p}) + \sum_{r=1}^{p-1}\, [F(x_{<r},x_r,y_{>r}) - F(x_{<r},y_r,y_{>r})]
\end{split}
\eeqn
proves the lemma.
\end{proof}

The next lemma is a reflection of the fact that $\cH_-$ and $\cH_+$ are dynamically closed, as mentioned earlier; see also~\cite{Stenlund_2010}.
\begin{lem}\label{lem:dynHolder}
Let $F:X^{p+q}\to\bR$ be dynamically H\"older continuous in the sense that
\beqn
x_r \mapsto F(x_1,\dots,x_{p+q}) \in
\begin{cases}
\cH_+(c_r,\vartheta_+), & 1\le r\le p,
\\
\cH_-(c_r,\vartheta_-), & p+1 \le r\le p+q.
\end{cases}
\eeqn
Let $i_1<\dots<i_p\le 0$ and $0\le j_1<\dots<j_q$. Then
\beqn
x\mapsto F(T^{i_1}x,\dots,T^{i_p}x,y_1,\dots,y_q) \in \cH_+\!\left(\sum_{r = 1}^p c_r \vartheta_+^{- i_r},\vartheta_+\right)
\eeqn
for all $(y_1,\dots,y_q)\in X^q$ and
\beqn
y\mapsto F(x_1,\dots,x_p,T^{j_1}y,\dots,T^{j_q}y) \in \cH_-\!\left(\,\sum_{r = p+1}^{p+q} c_r \vartheta_-^{j_r},\vartheta_-\right)
\eeqn
for all $(x_1,\dots,x_p)\in X^p$.
\end{lem}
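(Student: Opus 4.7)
The proof proposal is to combine the telescoping identity \eqref{eq:telescope} with the coordinate-wise dynamical Hölder hypothesis, using the basic fact (recalled earlier in the excerpt) that iterating backwards preserves local unstable manifolds and stretches the future separation time linearly. I will only write out the unstable case in detail, since the stable case is identical with time reversed.

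Fix $(y_1,\dots,y_q) \in X^q$ and take two points $x, \tilde x$ belonging to the same local unstable manifold $W^u$. Setting $x_r = T^{i_r} x$ and $\tilde x_r = T^{i_r}\tilde x$ for $1\le r \le p$, the identity \eqref{eq:telescope} (applied in the first $p$ slots, with the $y_s$'s left untouched in both terms) gives
\[
F(x_1,\dots,x_p,y_1,\dots,y_q) - F(\tilde x_1,\dots,\tilde x_p,y_1,\dots,y_q) = \sum_{r=1}^p \Delta_r,
\]
where each $\Delta_r$ is the difference of two evaluations of $F$ that agree in every coordinate except the $r$-th, in which they differ by $x_r$ versus $\tilde x_r$.

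The key step is to bound each $\Delta_r$ by $c_r \vartheta_+^{-i_r}\,\vartheta_+^{s_+(x,\tilde x)}$. Since $\tilde x \in W^u(x)$ and $i_r \le 0$, applying $T^{i_r}$ (i.e., iterating the inverse map $-i_r \ge 0$ times) keeps the two points on a common local unstable manifold, and the identity $s_+(T^{-n}z, T^{-n}w) = n + s_+(z,w)$ for $n\ge 0$, recalled just before Definition~2.1 in the excerpt, gives
\[
s_+(T^{i_r}x, T^{i_r}\tilde x) = -i_r + s_+(x,\tilde x).
\]
The Hölder hypothesis on the $r$-th slot of $F$, applied to the points $T^{i_r}x$ and $T^{i_r}\tilde x$ which lie on the same local unstable manifold, then yields
\[
|\Delta_r| \le c_r\, \vartheta_+^{s_+(T^{i_r}x,T^{i_r}\tilde x)} = c_r\, \vartheta_+^{-i_r}\, \vartheta_+^{s_+(x,\tilde x)}.
\]
Summing over $r = 1,\dots,p$ produces the claimed estimate with constant $\sum_{r=1}^p c_r \vartheta_+^{-i_r}$ and rate $\vartheta_+$.

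The stable assertion is proved by the same argument, using the telescoping identity in the last $q$ slots, the analogous identity $s_-(T^n z, T^n w) = n + s_-(z,w)$ for $n \ge 0$ and points on a common local stable manifold, and the Hölder hypothesis for the $r$-th slot with $p+1 \le r \le p+q$. There is no real obstacle here; the only thing to be careful about is the direction of the shift, namely that the exponent of $\vartheta_\pm$ accumulates additively $-i_r$ or $j_r$, i.e., the separation time \emph{grows} along the iteration whose direction is compatible with the type of manifold, which is precisely why the class $\cH_\pm$ is dynamically closed under composition with $T^{i_r}$ for $i_r$ of the appropriate sign.
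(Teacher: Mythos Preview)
Your proof is correct and follows essentially the same approach as the paper's: both apply the telescoping identity~\eqref{eq:telescope} in the first $p$ (respectively last $q$) slots, use that $T^{i_r}$ with $i_r\le 0$ preserves local unstable manifolds and satisfies $s_+(T^{i_r}x,T^{i_r}\tilde x)=s_+(x,\tilde x)-i_r$, and then sum the resulting single-coordinate bounds.
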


\begin{proof}
Denote $G(x) = F(T^{i_1}x,\dots,T^{i_p}x,y_1,\dots,y_q)$. Let $x$ and $\bar x$ belong to the same local unstable manifold, $x\in W^u(\bar x)$. Recalling $i_r\le 0$, we have $T^{i_r}x\in W^u(T^{i_r}\bar x)$, and $s_+(T^{i_r}x,T^{i_r}\bar x) = s_+(x,\bar x) - i_r$. Thus, identity~\eqref{eq:telescope} yields
\beqn
|G(x) - G(\bar x)| \le \sum_{r = 1}^p c_r \vartheta_+^{s_+(T^{i_r}x,T^{i_r}\bar x)} = \left(\sum_{r = 1}^p c_r \vartheta_+^{- i_r}\right)\! \vartheta_+^{s_+(x,\bar x)},
\eeqn
proving the first claim. A corresponding result holds for the inverse map $T^{-1}$, which is equivalent to the second claim.
\end{proof}

We are now ready to prove the first theorem, concerning $K = 2$.
\begin{proof}[Proof of Theorem~\ref{thm:pair}]
\beqn
I_1 = (i_1,i_2\dots,i_{p_1})
\quad\text{and}\quad
I_2 = (i_{p_1+1},i_{p_1+2}\dots,i_{p_2}).
\eeqn
Since the billiard process is stationary, the finite-dimensional distributions are the same for the translated index sets
\beqn
I_1' = I_1 - m = (i_1',i_2'\dots,i_{p_1}')
\quad\text{and}\quad
I_2' = I_2 - m = (i_{p_1+1}',i_{p_1+2}'\dots,i_{p_2}'),
\eeqn
where
\beqn
i_r' = i_r - m,  \qquad 1 \le r\le p_2,
\eeqn
and $m\in\bN$ is a number to be determined later. For the moment it suffices to assume that $i_{p_1} < m \le i_{p_1+1}$, meaning $i_{p_1}' < 0\le i_{p_1+1}'$.

For brevity, define
\beqn
G(x,y) = F(T^{i_1'}x,\dots,T^{i_{p_1}'}x,T^{i_{p_1+1}' - i_{p_1+1}'}y,\dots,T^{i_{p_2}' - i_{p_1+1}'}y).
\eeqn
Of course, we then have
\beqn
G(y,T^{i_{p_1+1}'} y) = F(T^{i_1'}y,\dots,T^{i_{p_1}'}y,T^{i_{p_1+1}'}y,\dots,T^{i_{p_2}'}y),
\eeqn
and
\beqn
\begin{split}
\int F\,\rd\bfP_{I_1'\cup I_2'}  = \int_\cQ\int_{\xi_q} G(y,T^{i_{p_1+1}'} y) \,\rd\nu_q(y)\,\rd\lambda(q),
\end{split}
\eeqn
Since $i_1'<\dots < i_{p_1}' < 0$, Lemma~\ref{lem:dynHolder} implies that 
\beqn
x\mapsto G(x,T^{i_{p_1+1}'} y) \in \cH_+\!\left(\sum_{r = 1}^p c_r \vartheta_+^{- i_r'},\vartheta_+\right),
\eeqn
so
\beqn
\sup_{y\in\xi_q}\left|\int_{\xi_q} G(x,T^{i_{p_1+1}'} y)\,d\nu_q(x) - G(y,T^{i_{p_1+1}'} y)\right| \le \sum_{r = 1}^{p_1} c_r \vartheta_+^{- i_r'}.
\eeqn
Inserting this estimate into the identity above, we obtain
\beqn
\left| \int F\,\rd\bfP_{I_1'\cup I_2'} - \int_\cQ\int_{\xi_q} \int_{\xi_q} G(x,T^{i_{p_1+1}'} y)\,\rd\nu_q(y) \,\rd\nu_q(x)\,\rd\lambda(q) \right| \le \sum_{r = 1}^{p_1} c_r \vartheta_+^{- i_r'}
\eeqn
after an application of Fubini's theorem.

Let us denote
\beqn
\hat\cQ = \lambda(\{q\in\cQ\,:\, |\xi_q| \le e^{-\frac{i_{p_1+1}'}{3a_0}}\}), 
\eeqn
where $a_0$ is the constant appearing in Lemma~\ref{lem:billiards_pushforward}.
Note that, by Lemma~\ref{lem:billiards_tail},
\beqn
\mu(\hat\cQ) \le M_1e^{-\frac{i_{p_1+1}'}{3a_0}}.
\eeqn
Since $0 = i_{p_1+1}' - i_{p_1+1}'<\dots< i_{p_2}' - i_{p_1+1}'$, Lemma~\ref{lem:dynHolder} implies that 
\beqn
y\mapsto G(x,y)\in\cH_-\!\left(\,\sum_{r=p_1+1}^{p_2} c_r\vartheta_-^{i_r' - i_{p_1+1}'},\vartheta_-\right).
\eeqn
Therefore, by Lemma~\ref{lem:billiards_pushforward},
\beqn
\begin{split}
& \left| \int_{\xi_q} G(x,T^{i_{p_1+1}'} y)\,\rd\nu_q(y) - \int G(x,y)\,\rd\mu(y)\right| 
\\
\le\ & M_0\!\left(\,\sum_{r=p_1+1}^{p_2} c_r\vartheta_-^{i_r' - i_{p_1+1}'} + \|G\|_\infty\right)\! \max(\theta_0,\vartheta_-)^{\frac12( i_{p_1+1}' - \frac13  i_{p_1+1}')} 
\\
\le\ & M_0\!\left(\,\sum_{r=p_1+1}^{p_2} c_r\vartheta_-^{i_r' - i_{p_1+1}'} + \|F\|_\infty\right)\! \max(\theta_0,\vartheta_-)^{\frac13 i_{p_1+1}' } 
\equiv E
\end{split}
\eeqn
if $q\in\cQ\setminus\hat\cQ$. On the other hand, if $q\in\hat\cQ$, we have the trivial bound
\beqn
\left| \int_{\xi_q} G(x,T^{i_{p_1+1}'} y)\,\rd\nu_q(y) - \int G(x,y)\,\rd\mu(y)\right| \le 2\|G\|_\infty \le 2\|F\|_\infty.
\eeqn
This yields
\beqn
\begin{split}
& \left| \int_{\xi_q}\int_{\xi_q} G(x,T^{i_{p_1+1}'} y)\,\rd\nu_q(y)\,\rd\nu_q(x) - \int_{\xi_q}\int G(x,y)\,\rd\mu(y)\,\rd\nu_q(x)\right|
\\
\le\ & E\, 1_{\cQ\setminus\hat\cQ}(q) + 2\|F\|_\infty 1_{\hat\cQ}(q).
\end{split}
\eeqn
Integrating the expression inside the absolute value with respect to $\rd\lambda(q)$, we obtain
\beqn
\begin{split}
& \Biggl| \int_\cQ \int_{\xi_q}\int_{\xi_q} G(x,T^{i_{p_1+1}'} y)\,\rd\nu_q(y)\,\rd\nu_q(x)\,\rd\lambda(q) 
\\
&\qquad\qquad - \int_\cQ\int_{\xi_q}\int G(x,y)\,\rd\mu(y)\,\rd\nu_q(x)\,\rd\lambda(q)\Biggr| 
\\
\le\ & 
E\lambda(\cQ\setminus\hat\cQ) +  2\|F\|_\infty\lambda(\hat\cQ)\le E + 2\|F\|_\infty M_1e^{-\frac{i_{p_1+1}'}{3a_0}}.
\end{split}
\eeqn
Hence,
\beqn
\begin{split}
& \left| \int F\,\rd\bfP_{I_1'\cup I_2'} - \int_\cQ\int_{\xi_q} \int G(x,y)\,\rd\mu(y) \,\rd\nu_q(x)\,\rd\lambda(q) \right| 
\\
\le\ & \sum_{r = 1}^{p_1} c_r \vartheta_+^{- i_r'} + E + 2\|F\|_\infty M_1e^{-\frac{i_{p_1+1}'}{3a_0}}.
\end{split}
\eeqn
Recalling $\int_\cQ \nu_q\,\rd\lambda(q) = \mu$, Fubini's theorem gives
\beqn
\begin{split}
& \left| \int F\,\rd\bfP_{I_1'\cup I_2'} - \iint G(x,y)\,\rd\mu(x) \,\rd\mu(y) \right|
\\
\le\ & \sum_{r = 1}^{p_1} c_r \vartheta_+^{- i_r'} + M_0\!\left(\,\sum_{r=p_1+1}^{p_2} c_r\vartheta_-^{i_r' - i_{p_1+1}'} + \|F\|_\infty\right)\! \max(\theta_0,\vartheta_-)^{\frac {i_{p_1+1}'}3} 
\\
& \qquad + 2\|F\|_\infty M_1e^{-\frac{i_{p_1+1}'}{3a_0}},
\end{split}
\eeqn
where we note that
\beqn
\iint G(x,y)\,\rd\mu(x) \,\rd\mu(y) = \int F\,\rd(\bfP_{I_1'}\otimes\bfP_{I_2''})
\eeqn
with $I_2'' = I_2' - i_{p_1+1}' = (i_{p_1+1}'- i_{p_1+1}',\dots,i_{p_2}'-i_{p_1+1}')$. Stationarity guarantees that 
\beqn
\bfP_{I_1'\cup I_2'} = \bfP_{I_1\cup I_2},\quad \bfP_{I_1'} = \bfP_{I_1}
\quad\text{and}\quad \bfP_{I_2''} = \bfP_{I_2}.
\eeqn
Inserting $i_r' = i_r - m$ into the upper bound above, we see that
\beqn
\begin{split}
& \left| \int F\,\rd\bfP_{I_1\cup I_2} -  \int F\,\rd(\bfP_{I_1}\otimes\bfP_{I_2}) \right|
\\
\le\ & \sum_{r = 1}^{p_1} c_r \vartheta_+^{i_{p_1} - i_r}\vartheta_+^{m - i_{p_1}} + M_0\!\left(\,\sum_{r=p_1+1}^{p_2} c_r\vartheta_-^{i_r - i_{p_1+1}} + \|F\|_\infty\right)\! \max(\theta_0,\vartheta_-)^{\frac {i_{p_1+1} - m}3} 
\\
& \qquad+ 2\|F\|_\infty M_1e^{-\frac{i_{p_1+1} - m}{3a_0}},
\end{split}
\eeqn
whenever $i_{p_1} < m \le i_{p_1+1}$. Finally, let $m$ be the smallest integer $\ge\frac14(3i_{p_1}+i_{p_1+1})$. Then $\frac13(i_{p_1+1}-m) \ge \frac14 (i_{p_1+1}-i_{p_1})-\frac13$ and $m-i_{p_1}\ge \frac14 (i_{p_1+1}-i_{p_1})$. This yields the final estimate
\beqn
\begin{split}
& \left| \int F\,\rd\bfP_{I_1\cup I_2} -  \int F\,\rd(\bfP_{I_1}\otimes\bfP_{I_2}) \right|
\\
\le\ & \left(\sum_{r = 1}^{p_1} c_r \vartheta_+^{i_{p_1} - i_r}\right)\!\vartheta_+^{\frac14 (i_{p_1+1}-i_{p_1})} 
\\
& \qquad + M_0\!\left(\,\sum_{r=p_1+1}^{p_2} c_r\vartheta_-^{i_r - i_{p_1+1}} + \|F\|_\infty\right)\! \max(\theta_0,\vartheta_-)^{\frac14 (i_{p_1+1}-i_{p_1})-\frac13} 
\\
& \qquad+ 2\|F\|_\infty M_1e^{-\frac{1}{4a_0}(i_{p_1+1}-i_{p_1})+\frac1{3a_0}}.
\end{split}
\eeqn
Defining the system constant $\theta_1 = e^{-\frac{1}{a_0}}$, we arrive at the claimed bound.
\end{proof}

We proceed to the proof of the second theorem, concerning $K\ge 2$. 

\begin{proof}[Proof of Theorem~\ref{thm:multiple}]
The proof is based on induction with respect to~$K$. 

\medskip
\noindent\underline{Case $K=2$}: The assumption is that $F$ is $(I_1,I_2)$-admissible with the same parameters~$c,\vartheta$. Therefore, Theorem~\ref{thm:pair} yields
\beqn
\begin{split}
& \left| \int F\,\rd\bfP_{I_1\cup I_2} -  \int F\,\rd(\bfP_{I_1}\otimes\bfP_{I_2}) \right|
\\
\le\ & \left(c \sum_{r = 1}^{p_1} \vartheta^{i_{p_1} - i_r}\right)\!\vartheta^{\frac14 \ell_1} 
+ M_0\!\left(c\sum_{r=p_1+1}^{p_2} \vartheta^{i_r - i_{p_1+1}} + \|F\|_\infty\right)\! \max(\theta_0,\vartheta)^{\frac14 \ell_1-\frac13} 
\\
&\qquad + 2M_1\|F\|_\infty  \theta_1^{\frac14 \ell_1-\frac13}
\\
\le\ & \frac c{1-\vartheta}\vartheta^{\frac14 \ell_1} 
+ M_0\!\left(\frac c{1-\vartheta} + \|F\|_\infty\right)\!\theta_0^{-\frac13} \max(\theta_0,\vartheta)^{\frac14 \ell_1} 
 + 2M_1\|F\|_\infty  \theta_1^{\frac14 \ell_1-\frac13}
 \\
\le\ & \left(\frac c{1-\vartheta}
+ M_0\!\left(\frac c{1-\vartheta} + \|F\|_\infty\right)\!\theta_0^{-\frac13}  
 + 2M_1\|F\|_\infty \theta_1^{-\frac13} \right)\!  \max(\vartheta,\theta_0,\theta_1)^{\frac14\ell_1}.
\end{split}
\eeqn
Defining the system constants $M = \max(1+M_0\theta_0^{-\frac13}\, ,\,M_0\theta_0^{-\frac13}+2M_1\theta_1^{-\frac13})$ and $\theta = \max(\vartheta,\theta_0,\theta_1)^{\frac14}$, we obtain
\beqn
\left|\int F\,\rd\bfP_{I} - \int F\,\rd(\bfP_{I_1}\otimes\bfP_{I_2})\right| \le M\!\left(\frac{c}{1-\vartheta} + \|F\|_\infty \right)\!\theta^{\ell_1}
\eeqn
as claimed.

\medskip
\noindent\underline{Case $K>2$}: We are now assuming that $F$ is $(I_1,\dots,I_K)$-admissible with the same parameters~$c,\vartheta$. In particular, $F$ is then $(I_1\cup\dots\cup I_{K-1}, I_K)$-admissible. Hence, the preceding case implies
\beqn
\left|\int F\,\rd\bfP_{I_1\cup\dots\cup I_K} -  \int F\,\rd(\bfP_{I_1\cup\dots\cup I_{K-1}}\otimes\bfP_{I_K})\right| \le M\!\left(\frac{c}{1-\vartheta} + \|F\|_\infty \right)\!\theta^{\ell_{K-1}}.
\eeqn
Suppose that
\beqn
\left|\int G\,\rd\bfP_{I_1\cup\dots\cup I_{K-1}} - \int G\,\rd(\bfP_{I_1}\otimes\dots\otimes \bfP_{I_{K-1}})\right| \le M\!\left(\frac{c}{1-\vartheta} + \|G\|_\infty \right)\!\sum_{k=1}^{K-2}\theta^{\ell_k}
\eeqn
for all $(I_1,\dots,I_{K-1})$-admissible functions $G$ with the same parameters $c,\vartheta$. It now suffices to just observe that $F$ is $(I_1,\dots,I_{K-1})$-admissible in its first $p_{K-1}$ arguments. More precisely, given $\bfy \in X^{p_K-p_{K-1}}$, the function 
\beqn
G_{\bfy}:X^{p_{K-1}}\to\bR: G_{\bfy}(\bfx) = F(\bfx,\bfy)
\eeqn
is $(I_1,\dots,I_{K-1})$-admissible, bounded by $\|F\|_\infty$. Hence,
\beqn
\begin{split}
& \left|\int F\,\rd(\bfP_{I_1\cup\dots\cup I_{K-1}}\otimes\bfP_{I_K}) - \int F\,\rd(\bfP_{I_1}\otimes\dots\otimes \bfP_{I_{K}}) \right|
\\
=\ & \left|\int\!\left(\int G_\bfy\,\rd \bfP_{I_1\cup\dots\cup I_{K-1}} - \int G_\bfy\,\rd(\bfP_{I_1}\otimes\dots\otimes \bfP_{I_{K-1}})\right)\rd\bfP_{I_K}(\bfy) \right|
\\
\le\ & M\!\left(\frac{c}{1-\vartheta} + \|F\|_\infty \right)\!\sum_{k=1}^{K-2}\theta^{\ell_k}.
\end{split}
\eeqn
This finishes the proof.
\end{proof}




%
\bibliography{Marginals}{}
\bibliographystyle{plainurl}


\vspace*{\fill}

\end{document}